\theoremstyle{change}
\newtheorem{thm}{Theorem}[section]
\newtheorem{THM}{Theorem}
\newtheorem{prop}[thm]{Proposition}
\newtheorem{lemma}[thm]{Lemma}
\newtheorem*{conjecture}{Conjecture}
\theoremstyle{definition}
\newtheorem{remark}[thm]{Remark}
\numberwithin{equation}{section}
\begin{document}
\title{The Characteristic variety of a generic  foliation }

\author{Jorge  Vit\'{o}rio  Pereira}
 \address{Jorge Vit\'orio Pereira \\ IMPA, Estrada
 Dona  Castorina, 110\\
 22460-320, Rio de Janeiro, RJ, Brazil}
 \email{jvp@impa.br}

\subjclass[2010]{Primary: 37F75, 16S32; Secondary: 37J30, 32C38.}
\thanks{}

\keywords{characteristic foliation, invariant variety, $\mathcal D$-modules.}
\begin{abstract}
We confirm a conjecture of  Bernstein-Lunts which predicts that the characteristic variety
 of a generic polynomial vector field has no homogeneous involutive subvarieties
besides the zero section and subvarieties of fibers over singular points.
\end{abstract}

\maketitle

\tableofcontents

\section{Introduction}

\subsection{Foliations}Let $\mathcal F$ be a one-dimensional singular holomorphic foliation on a smooth projective variety $X$.
The {\bf characteristic variety} $\mathrm{ch}(\mathcal F)$ of $\mathcal F$ is the irreducible subvariety of $E(T^*X)$, the total space
of the cotangent bundle of $X$, with fiber over a non-singular point $x \in X_0  = X \setminus \mathrm{sing}(\mathcal F)$  equal to
the $1$-forms at $x$ which vanish on $T_x \mathcal F$.  More succinctly, if  $N^* \mathcal F$ is the conormal sheaf of $\mathcal F$
 then its restriction at  $X_0$ is a vector sub-bundle of $T^* X_0$ and we can write
\[
\mathrm{ch}(\mathcal F)  = \overline { E( N^* \mathcal F_{| X_0}) } \,
\]
where the closure is taken in $E(T^*X) \supset E(T^*X_0)$.

\smallskip

Clearly $\mathrm{ch}(\mathcal F)$ is a hypersurface of $E(T^*X)$. If $\omega$ is the  non-degenerate $2$-form
which induces the canonical symplectic structure on $T^*X$ then its restriction to $\mathrm{ch}(\mathcal F)$ induces
a one-dimensional foliation $\mathcal F^{(1)}$ on (the smooth locus of) $\mathrm{ch}(\mathcal F)$ which will be called
the {\bf first prolongation of } $\mathcal F$.

\smallskip

In this work we are interested in the subvarieties of $\mathrm{ch}(\mathcal F)$ invariant by $\mathcal F^{(1)}$ when $\mathcal F$
is sufficiently general. For no matter which $\mathcal F$ there is always at least one subvariety of  $\mathrm{ch}(\mathcal F)$
invariant by $\mathcal F^{(1)}$: the zero section of $T^*X$. If the singular set of  $\mathcal F$ is non-empty but of dimension zero then
the fibers over it, and some subvarieties of these fibers, are also left invariant by $\mathcal F^{(1)}$.

We will say that $\mathrm{ch}(\mathcal F)$ is a {\bf quasi-minimal characteristic variety} if  (a) $\mathcal F$ has isolated singularities; and
(b) every  irreducible
homogeneous ( on the fibers of $\mathrm{ch}(\mathcal F) \to X$ ) subvariety  of $ \mathrm{ch}(\mathcal F)$  left invariant by $\mathcal F^{(1)}$ is either the zero section, or a subvariety of a fiber over the singular
set of $\mathcal F$, or the whole $\mathrm{ch}(\mathcal F)$.

\begin{THM}\label{T:1}
Let $ X$ be a smooth projective variety,  $\mathcal L$ an ample line bundle over it, and $k \gg 0$ a sufficiently large
integer. If $ \mathcal F \in \mathbb P H^0(X, TX \otimes \mathcal L^{\otimes k})$ is a very generic foliation then
 $\mathrm{ch}(\mathcal F)$ is a quasi-minimal characteristic variety.
\end{THM}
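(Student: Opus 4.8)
The plan is to exploit a concrete description of $\mathcal F^{(1)}$. Since $\mathrm{ch}(\mathcal F)$ is a hypersurface of the symplectic manifold $E(T^*X)$ it is coisotropic, and its characteristic (null) foliation is generated, over $X_0$, by the Hamiltonian vector field of the symbol $(x,\xi)\mapsto\xi(v(x))$ of a local generator $v$ of $\mathcal F$; that Hamiltonian vector field is the cotangent lift of $v$. Equivalently, over $X_0$ the variety $\mathrm{ch}(\mathcal F)$ is the total space of the conormal bundle $N^*\mathcal F$, and the leaves of $\mathcal F^{(1)}$ are the horizontal lifts of the leaves of $\mathcal F$ for the Bott partial connection on $N^*\mathcal F$. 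Two features are used throughout: the flow of $\mathcal F^{(1)}$ covers the flow of $\mathcal F$ under $\pi\colon E(T^*X)\to X$, and over $X_0$ it acts linearly on the fibres of $\pi$.

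Property (a) is routine: for $k\gg 0$ the bundle $TX\otimes\mathcal L^{\otimes k}$ is globally generated, so by Bertini the zero scheme of a generic section is finite. For (b), let $W\subset\mathrm{ch}(\mathcal F)$ be irreducible, homogeneous and $\mathcal F^{(1)}$-invariant, and put $Y=\overline{\pi(W)}$. Then $Y$ is $\mathcal F$-invariant. If $\dim Y=0$ then $Y$ is a point; near a point of $X_0$ the $\mathcal F^{(1)}$-flow is transverse to the fibres of $\pi$, so $Y\subset\mathrm{Sing}(\mathcal F)$ and $W$ is a subvariety of a fibre over the singular set. If $\dim Y>0$, a Jouanolou-type argument --- the set of foliations in $\mathbb P H^0(X,TX\otimes\mathcal L^{\otimes k})$ possessing a proper positive-dimensional invariant subvariety of a prescribed Chow type is a proper subvariety, and there are countably many such types --- shows that for very generic $\mathcal F$ one has $Y=X$, so $W$ dominates $X$; and if also $\dim W=2n-1$ then $W=\mathrm{ch}(\mathcal F)$. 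There remains the case: $W$ dominates $X$, $W\neq$ zero section, $W\neq\mathrm{ch}(\mathcal F)$.

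Fibrewise linearity gives a further reduction. For generic $x\in X_0$ the fibre $W_x\subset N^*_x\mathcal F\cong\C^{n-1}$ is a cone with $1\le\dim W_x\le n-2$ (the extremes forcing the zero section or $\mathrm{ch}(\mathcal F)$). Since the flow carries $N^*_x\mathcal F$ to $N^*_{x'}\mathcal F$ linearly, the fibrewise linear span $x\mapsto\langle W_x\rangle$ defines, over a dense open subset of $X$, an $\mathcal F^{(1)}$-invariant subbundle of $N^*\mathcal F$ of some rank $j$ with $1\le j\le n-1$; let $\mathcal M\subset N^*\mathcal F$ be its saturation, an $\mathcal F$-invariant (Bott-flat) subsheaf on all of $X$. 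Thus one must exclude, for very generic $\mathcal F$: either $j\le n-2$, so that $\mathcal M$ is a proper Bott-flat subsheaf of $N^*\mathcal F$, or $j=n-1$, so that $W$ is a full-span, genuinely nonlinear $\mathcal F^{(1)}$-invariant conic subvariety dominating $X$. Being Bott-flat, $\mathcal M$ is reconstructed from its value at one point by parallel transport along leaves, hence is a global algebraic solution of an object controlled by the holonomy pseudogroup of $\mathcal F$.

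The heart of the proof --- and the main obstacle --- is to show that no such object survives for $k\gg 0$ and very generic $\mathcal F$. The strategy is again Jouanolou-type: stratify the putative $W$ (or $\mathcal M$) by Chow type and, for each type, bound the dimension of the incidence variety $\{(\mathcal F,W)\}$ inside $\mathbb P H^0(X,TX\otimes\mathcal L^{\otimes k})\times\mathrm{Chow}$. The key point is that for a fixed type the linear conditions on $v$ expressing ``the symbol of $v$ vanishes on $W$'' together with the invariance conditions grow in number roughly like $k^{\dim X}$, whereas the relevant Chow variety has dimension independent of $k$; so for $k$ large the incidence variety does not dominate $\mathbb P H^0$, and a very generic $\mathcal F$ --- lying outside the countable union over types --- admits no such $W$. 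Making this estimate uniform over \emph{all} Chow types is the delicate part: one needs either an a priori bound on the degree of an $\mathcal F^{(1)}$-invariant conic subvariety (coming, for the subbundle case, from the fact that $\mathcal M\subset N^*\mathcal F$ has bounded Chern classes, and, for the nonlinear case, from a cofactor/Baum--Bott analysis), or a direct argument valid in all degrees in the style of Jouanolou's treatment of invariant hypersurfaces. Ampleness of $\mathcal L$ is what makes this work: it forces $c_1(N^*\mathcal F)=c_1(K_X)-k\,c_1(\mathcal L)$ to be very negative, which simultaneously restricts the Bott-flat subsheaves and supplies the room in $H^0(X,TX\otimes\mathcal L^{\otimes k})$ that the genericity count requires. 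Combining this with the reductions above yields that $\mathrm{ch}(\mathcal F)$ is quasi-minimal for very generic $\mathcal F$.
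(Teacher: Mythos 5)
Your setup is sound and overlaps with the paper's: the identification of $\mathcal F^{(1)}$ with the Hamiltonian lift, the duality with Bott's partial connection, the reduction (via the Jouanolou-type theorem of \cite{CP}) to a $W$ that dominates $X$ and is neither the zero section nor all of $\mathrm{ch}(\mathcal F)$, and the observation that the flow acts linearly on fibres. But the heart of your argument is missing. You yourself flag it: the exclusion of a proper Bott-flat subsheaf $\mathcal M\subset N^*\mathcal F$ (and of a ``full-span nonlinear'' invariant cone) is reduced to a dimension count over Chow types whose uniformity in the type you leave open. This is not a technicality one can wave at: since $k$ must be fixed before quantifying over the countably many Chow types, you need the incidence variety for \emph{every} type to fail to dominate $\mathbb P H^0(X,TX\otimes\mathcal L^{\otimes k})$ for a single $k$, and without an a priori degree bound on $\mathcal F^{(1)}$-invariant cones there is no such estimate. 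No such bound is supplied, and it is exactly the point where a naive Jouanolou-style count for prolongation-invariant subvarieties breaks down.

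The paper avoids this entirely by two local arguments at a non-resonant singularity $p$ (which a very generic $\mathcal F$ possesses). First, the linear holonomy along a separatrix of $p$ generates a subgroup of $GL(n-1,\C)$ whose Zariski closure is a maximal torus; an algebraic cone invariant under a maximal torus is a finite union of linear subspaces, and spreading this along a Zariski-dense separatrix (Theorem \ref{T:CPbis}) shows the fibres of $W$ are themselves finite unions of linear spaces --- not merely that their linear span is invariant. Hence $\mathcal F$ is tangent to a multi-distribution $\mathcal D$ of codimension $q\le n-2$ (Proposition \ref{P:multd}). Second, in formal linearizing coordinates at $p$ the defining $q$-form of $\mathcal D$ is invariant under the full torus $(\C^*)^n$, which forces it into the normal form of Lemma \ref{L:normal}; from this one reads off that $\mathrm{sing}(\mathcal D)$ (or the non-decomposability locus) has positive dimension and is $\mathcal F$-invariant (Proposition \ref{P:vero}), contradicting the absence of proper invariant subvarieties. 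You would need to either import this holonomy/torus mechanism or actually produce the degree bound your count requires; as written, the proposal does not prove the theorem.
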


 In the statement of the theorem above and throughout, by a very generic point of  a given variety we mean a point outside
 a countable union of  Zariski closed subvarieties. The expression generic point  will be reserved to
 points outside a finite union of Zariski closed subvarieties.

Although Theorem \ref{T:1} can be thought as a natural development of Jouanolou's Theorem  and its subsequent
 generalizations,  see \cite{CP} and references therein, it is motivated by a problem coming from the
 representation theory of Weyl algebras that
we  briefly review below.

\subsection{Weyl algebras}

Let $A_n$ be the $n$-th Weyl algebra over $\mathbb C$, that is $A_n$ is the algebra of $\mathbb C$-linear differential operators
on the polynomial ring $\mathbb C[ x_1, \ldots, x_n]$. A basic invariant of an irreducible $A_n$-module $M$ is its Gelfand-Kirillov dimension $GK\dim M$.
After Bernstein  this invariant is subject to the inequality $GK\dim M \ge n$ and equality holds true for important  classes of irreducible $A_n$-modules, see
\cite{B}. If $GK \dim M =n$ then $M$ is, by definition, a holonomic $A_n$-module

\smallskip

For some time, some  believed  that every irreducible $A_n$-module $M$ is holonomic. In 1985 Stafford came up with examples of $A_n$-modules
of particularly simple form and having Gelfand-Kirillov dimension equal to $2n-1$. His examples are of the form $A_n/ I A_n$ where $I$ is a   principal left ideal
generated by an  element of the form $\xi + f$ where $\xi$ is a polynomial vector field  and $f$ is polynomial, see \cite{St}. For those not familiar
with the Gelfand-Kirillov dimension it is useful to remark that when   $I$ is a  principal maximal left ideal then $GK \dim A_n/ I A_n = 2n -1$, and the
search of examples of non-holonomic $A_n$-modules can be reduced to search of principal maximal left ideals of $A_n$.

\smallskip

Stafford's examples are explicit and his arguments are purely algebraic.  In \cite{BL},  Bernstein and Lunts
present two geometrically oriented approaches to construct principal maximal left ideals of $A_n$, and implement them for the second Weyl algebra.
In rough terms, their strategy rely on the the study of a natural foliation defined on the characteristic varieties  of the module. More specifically
they relate the maximality of the ideal to the non-existence of proper invariant subvarieties of this foliation. To define
a characteristic variety for a $A_n$-module, a filtration of $A_n$ has to be fixed and their two approaches are determined by
the choice of two different filtrations.

In the first approach they look  at the Bernstein filtration of  $A_n$, the $i$-th piece $A_n^i$ consists of polynomials in $\{ x_1, \ldots, x_n, \partial_{x_1}, \ldots,  \partial_{x_n} \}$  of
degree at most $i$. The corresponding symbol maps are
\[
\sigma_k : A_n^{k} \longrightarrow \frac{A_n^k} {A_n^{k-1}} \simeq \mathbb C_k [ x_1, \ldots, x_n, y_1, \ldots, y_n] \, .
\]
They proved that if $n=2$, $k \ge 4$ and $P \in \mathbb C_k [ x_1, \ldots, x_n, y_1, \ldots, y_n]$ is a very generic polynomial then
each  operator $d \in A_n^k$ satisfying $\sigma_k(d) = P$ generates a maximal left ideal of $A_n^{k}$. Still under the assumption
that $k \ge 4$, Lunts extends the above result to arbitrary $n \ge 2$ in \cite{Lu}. For $k = 3$ and $n\ge 2$ the very same statement
has been proved by McCune \cite{Mc}. All these results, in contrast with Stafford's, do not exhibit explicit examples
  of non-holonomic $A_n$-modules but instead prove that they are generic in the above sense. For an algorithm to produce explicit examples of the above form
  for $n=2$ and
  its implementation see \cite{AC}. \marginpar[Talvez mencionar que \cite{AC} prova a exist\^{e}ncia para $n=2$ de um aberto \ldots]

\smallskip

In their other approach, Bernstein and Lunts look at the standard filtration of $A_n$. Now the $i$-th piece
corresponds to differential operators of order $\le i$.
If $\xi$ is a polynomial vector field, $f$ a polynomial and  $I = <  \xi + f > $ then the characteristic variety of  $A_n/IA_n$
 coincides the characteristic variety of the foliation $\mathcal F_{\xi}$ as defined
in the previous section.
If $\mathcal F_{\xi}$
has a quasi-minimal characteristic variety then according to  \cite[Proposition 6]{BL} there exists $f \in \mathbb C[x_1, \ldots, x_n]$ for which
$I= < \xi + f > $ is maximal. While they do show that  a generic $\xi$ of degree $\ge 2$ on $\mathbb C^2$ has this property, they
leave the general case as a conjecture, see \cite[\S 4.2]{BL}.

\begin{conjecture}[{\bf Bernstein-Lunts}]\label{Conj:BL}
Let $n \ge 2$ and  $\xi$ be a very generic polynomial  vector field on $\mathbb C^n$ with coefficients of degree $\ge 2$.
Then $\mathrm{ch}(\mathcal F_{\xi})$ is a quasi-minimal characteristic variety.
\end{conjecture}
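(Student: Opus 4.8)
The plan is to derive the conjecture from Theorem~\ref{T:1} by passing to a projective compactification, the subtle point being that polynomial vector fields do \emph{not} sweep out a very generic family of projective foliations. Fix $d\ge 2$ and let $\xi=\sum_{i=1}^n a_i(x)\,\partial_{x_i}$ be a polynomial vector field on $\mathbb{C}^n$ with $\deg a_i\le d$. First I would regard $\xi$ as defining a one-dimensional singular holomorphic foliation $\mathcal{F}_\xi$ on $X=\mathbb{P}^n$; for $\xi$ generic the foliation $\mathcal{F}_\xi$ has isolated singularities, the hyperplane at infinity $H=\mathbb{P}^n\setminus\mathbb{C}^n$ is invariant by $\mathcal{F}_\xi$, and over the affine chart both $\mathrm{ch}(\mathcal{F}_\xi)$ and its first prolongation $\mathcal{F}_\xi^{(1)}$ restrict to exactly the objects attached to $\xi$ in \S\,1.2 and in \cite{BL}. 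Thus it suffices to prove that, for $\xi$ very generic, $\mathrm{ch}(\mathcal{F}_\xi)$ is quasi-minimal. Now $TX\otimes\mathcal{O}_{\mathbb{P}^n}(1)^{\otimes(d-1)}=T\mathbb{P}^n\otimes\mathcal{O}(d-1)$, and the foliations arising as $\mathcal{F}_\xi$ form a linear subsystem $\mathcal{D}_d\subset\mathbb{P} H^0\big(\mathbb{P}^n,T\mathbb{P}^n\otimes\mathcal{O}(d-1)\big)$, namely the locus on which $H$ is invariant; a dimension count shows $\mathcal{D}_d$ is a proper subvariety. Hence Theorem~\ref{T:1} does not apply verbatim: the countable union of proper closed ``bad loci'' produced in its proof could a priori contain $\mathcal{D}_d$ entirely.

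To get around this I would re-run the codimension estimates underlying Theorem~\ref{T:1} relative to the family $\mathcal{D}_d$ --- equivalently, inside the affine space of polynomial vector fields of degree $\le d$ on $\mathbb{C}^n$. The proof of Theorem~\ref{T:1} sorts the potential homogeneous $\mathcal{F}^{(1)}$-invariant subvarieties of $\mathrm{ch}(\mathcal{F})$ other than the zero section, the fibres over $\mathrm{sing}(\mathcal{F})$, and $\mathrm{ch}(\mathcal{F})$ itself into countably many numerical types $\tau$, and for each $\tau$ shows that the foliations admitting such a subvariety form a proper closed subvariety $Z_\tau$ of the parameter space --- either by a dimension count against the relevant Hilbert scheme of $E(T^*X)$ or by producing a single explicit ``good'' foliation. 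What is required here is the assertion that $\mathcal{D}_d\not\subset Z_\tau$ for every $\tau$; since $\mathcal{D}_d$ is irreducible this reduces to exhibiting, for each $\tau$, one polynomial vector field $\xi_\tau$ of degree $d$ on $\mathbb{C}^n$ whose characteristic variety carries no invariant subvariety of type $\tau$. These witnesses I would build by adapting those used in the proof of Theorem~\ref{T:1} (which are in the lineage of Jouanolou's examples), the only new demand being that they be realizable in the special affine shape, i.e.\ with $H$ invariant.

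The hard part will be exactly this verification, and it has two sources of difficulty. Because $H$ is forced to be invariant, $\mathcal{F}_\xi$ necessarily acquires singular points on $H$ and $\mathrm{ch}(\mathcal{F}_\xi)$ necessarily contains components lying over $H$, among them the conormal of $H$ over $H\setminus\mathrm{sing}(\mathcal{F}_\xi)$; so one must analyze $\mathcal{F}_\xi$ and $\mathcal{F}_\xi^{(1)}$ in a neighbourhood of $H$ --- ideally through a local normal form at the singularities of $\mathcal{F}_\xi$ lying on $H$ --- in order to separate cleanly the ``allowed'' components (subvarieties of fibres over isolated singularities, some of which now sit on $H$) from genuinely bad invariant subvarieties, and in particular to rule out that any bad one is supported over, or assembled from, $H$. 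Secondly, the conjecture demands the entire range $d\ge 2$, whereas Theorem~\ref{T:1} is stated only for $k\gg 0$; pushing the codimension estimates and the witness constructions down to $d=2$ forces one to exploit the rigidity of $\mathbb{P}^n$ in place of the asymptotic positivity of the line bundle, which is where the computations become delicate. For $n=2$ the second difficulty can be bypassed altogether by quoting \cite{BL}.

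Once $\mathcal{D}_d\not\subset Z_\tau$ is known for all $\tau$, the complement in $\mathcal{D}_d$ of $\bigcup_\tau(Z_\tau\cap\mathcal{D}_d)$ is the complement of a countable union of proper closed subvarieties of $\mathcal{D}_d$, hence contains a very generic point; any $\xi$ in that complement has, by construction, a quasi-minimal characteristic variety. This establishes the Bernstein-Lunts conjecture, and, via \cite[Proposition~6]{BL}, produces for such $\xi$ a polynomial $f$ with $\langle\xi+f\rangle$ a maximal left ideal.
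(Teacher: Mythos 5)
You have correctly isolated the real obstruction: the polynomial vector fields of degree $\le d$ sit inside $\mathbb{P}H^0(\mathbb{P}^n,T\mathbb{P}^n\otimes\mathcal{O}(d-1))$ as the proper subfamily of foliations leaving $H$ invariant (intrinsically, $\mathbb{P}H^0(\mathbb{P}^n,T\mathbb{P}^n(-\log H)\otimes\mathcal{O}(d-1))$), so the ``very generic'' conclusion of Theorem~\ref{T:1} does not transfer, and the restriction $k\gg 0$ is incompatible with the range $d\ge 2$. But your plan for overcoming this rests on a description of the proof of Theorem~\ref{T:1} that does not match the actual argument, and the step you defer as ``the hard part'' is the entire proof. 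There is no stratification of the bad locus into countably many numerical types $Z_\tau$ with explicit Jouanolou-style witnesses $\xi_\tau$ to be adapted: the proof of Theorem~\ref{T:1} is structural. Its only genericity inputs are (i) Theorem~\ref{T:CPbis} (every leaf is Zariski dense, quoted from \cite{CP}) and (ii) the existence of a non-resonant singularity; from these, Proposition~\ref{P:multd} converts any homogeneous $\mathcal{F}^{(1)}$-invariant $Y$ dominating $X$ into a multi-distribution tangent to $\mathcal{F}$ (via the maximal torus generated by the linear holonomy of a dense separatrix), and Proposition~\ref{P:vero} (via the formal torus action at the non-resonant point) produces from that multi-distribution a positive-dimensional $\mathcal{F}$-invariant algebraic set, contradicting density of leaves. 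So ``re-running the codimension estimates relative to $\mathcal{D}_d$'' is not a well-posed adaptation of anything in the paper, and you give no independent content for it.

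The correct (and much shorter) repair is to replace input (i) by its logarithmic version: for very generic $\mathcal{F}\in\mathbb{P}H^0(X,TX(-\log H)\otimes\mathcal{L}^{\otimes k})$ every leaf not contained in $H$ is Zariski dense (Theorem~\ref{T:CPbisrel}); for $X=\mathbb{P}^n$ this is Theorem~4.2 of \cite{Coutinho} and holds already for $k\ge 1$, i.e.\ for all $d\ge 2$, which disposes of your second difficulty without any delicate computation on $\mathbb{P}^n$. Input (ii) is an open dense condition on $\mathcal{D}(n,d)$ that one checks directly. With these two facts, Propositions~\ref{P:multd} and~\ref{P:vero} apply verbatim at a non-resonant singularity in the affine chart and yield the conjecture; no analysis of $\mathcal{F}_\xi^{(1)}$ near $H$, and no case-by-case witness construction, is needed. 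As written, your proposal identifies the right difficulty but does not contain a proof.
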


The three dimensional case of the conjecture has been proved recently by Coutinho \cite{Coutinho}.  In this paper we will
settle the general case.

\begin{THM}\label{T:2}
Bernstein-Lunts conjecture holds true.
\end{THM}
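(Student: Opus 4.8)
The plan is to deduce Theorem~\ref{T:2} from Theorem~\ref{T:1} applied to $X = \mathbb{P}^n$ with $\mathcal{L} = \mathcal{O}_{\mathbb{P}^n}(1)$, through the standard dictionary between polynomial vector fields on $\mathbb{C}^n$ and one-dimensional foliations on $\mathbb{P}^n$. A polynomial vector field $\xi$ of degree $d$ on $\mathbb{C}^n$ extends to a rational vector field on $\mathbb{P}^n$ and, after saturating, defines a one-dimensional foliation $\overline{\mathcal{F}}_\xi$; it is represented by a section of $T\mathbb{P}^n \otimes \mathcal{O}(k)$ for some $k = k(d)$ that grows linearly with $d$, i.e. by a point of $\mathbb{P} H^0(\mathbb{P}^n, T\mathbb{P}^n \otimes \mathcal{L}^{\otimes k})$ in the notation of Theorem~\ref{T:1}, and conversely the map sending such a section to its expression in the standard affine chart is dominant onto the space of degree-$d$ vector fields on $\mathbb{C}^n$. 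The point that glues the two pictures together is that the characteristic variety $\mathrm{ch}(\mathcal{F}_\xi)$ of the $A_n$-module $A_n/\langle \xi+f\rangle A_n$ attached to the standard filtration — which, as recalled in the introduction, is exactly the characteristic variety of the foliation $\mathcal{F}_\xi$ — is the trace of $\mathrm{ch}(\overline{\mathcal{F}}_\xi)$ on the open set $E(T^*\mathbb{C}^n) \subset E(T^*\mathbb{P}^n)$, the total space of $T^*\mathbb{P}^n$ over the affine chart. Indeed the principal symbol of $\xi + f$ equals that of $\xi$, and $\mathrm{ch}(\cdot)$ is insensitive to the behaviour of a foliation over its singular set; consequently $\mathrm{ch}(\mathcal{F}_\xi)$ is Zariski dense in $\mathrm{ch}(\overline{\mathcal{F}}_\xi)$ and $\mathcal{F}_\xi^{(1)}$ is the restriction of $\overline{\mathcal{F}}_\xi^{(1)}$.

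Granting this, I would first transport the needed genericity. Since the restriction map is dominant and the preimage of a countable union of proper subvarieties under a dominant morphism is again a countable union of proper subvarieties, a very generic $\xi$ of degree $d$ corresponds to a very generic foliation $\overline{\mathcal{F}}_\xi$ on $\mathbb{P}^n$; and for such $\xi$ one also has (a) $\xi$ has only isolated singularities on $\mathbb{C}^n$ and $\overline{\mathcal{F}}_\xi$ has only isolated singularities on $\mathbb{P}^n$, the additional ones lying on the hyperplane at infinity $H$, and (b) $H$ is not invariant by $\overline{\mathcal{F}}_\xi$, a nonempty and hence generic condition. Thus $\overline{\mathcal{F}}_\xi$ meets the hypotheses of Theorem~\ref{T:1}, so $\mathrm{ch}(\overline{\mathcal{F}}_\xi)$ is a quasi-minimal characteristic variety, and $\mathrm{sing}(\overline{\mathcal{F}}_\xi) \cap \mathbb{C}^n = \mathrm{sing}(\xi)$.

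The transfer of the conclusion is then formal. If $\mathrm{ch}(\mathcal{F}_\xi)$ did not satisfy the hypothesis of \cite[Proposition~6]{BL}, there would be an irreducible homogeneous subvariety $Z \subsetneq \mathrm{ch}(\mathcal{F}_\xi)$ invariant by $\mathcal{F}_\xi^{(1)}$ — equivalently an involutive subvariety — which is neither the zero section nor contained in a fiber over $\mathrm{sing}(\xi)$. Its Zariski closure $\overline{Z}$ in $E(T^*\mathbb{P}^n)$ is an irreducible homogeneous subvariety of $\mathrm{ch}(\overline{\mathcal{F}}_\xi)$, invariant by $\overline{\mathcal{F}}_\xi^{(1)}$ (invariance under a foliation passes to Zariski closures), not the zero section, and — because $\mathrm{ch}(\mathcal{F}_\xi)$ is dense in $\mathrm{ch}(\overline{\mathcal{F}}_\xi)$ while $Z$ is a proper subvariety of $\mathrm{ch}(\mathcal{F}_\xi)$ — not all of $\mathrm{ch}(\overline{\mathcal{F}}_\xi)$. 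Quasi-minimality then forces $\overline{Z}$ to lie in a fiber of $\mathrm{ch}(\overline{\mathcal{F}}_\xi) \to \mathbb{P}^n$ over a point of $\mathrm{sing}(\overline{\mathcal{F}}_\xi)$, so that $Z = \overline{Z} \cap E(T^*\mathbb{C}^n)$ is either empty (if that point lies on $H$) or contained in a fiber over a point of $\mathrm{sing}(\xi)$ — a contradiction in both cases. Hence $\mathrm{ch}(\mathcal{F}_\xi)$ satisfies the hypothesis of \cite[Proposition~6]{BL}, which then provides $f \in \mathbb{C}[x_1,\dots,x_n]$ with $\langle \xi + f\rangle$ a maximal left ideal; this is the content of Conjecture~\ref{Conj:BL} and proves Theorem~\ref{T:2}.

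The main obstacle I anticipate is not this transfer but matching up the range of degrees: Theorem~\ref{T:1} is stated for $k \gg 0$, whereas Conjecture~\ref{Conj:BL} only asks for coefficients of degree $\ge 2$, which translates into a fixed and possibly small value of $k$. I would deal with this by revisiting the proof of Theorem~\ref{T:1} in the special case $X = \mathbb{P}^n$, $\mathcal{L} = \mathcal{O}(1)$ — where the positivity and global-generation inputs that force $k$ large for a general $X$ are automatic — and checking that its argument already applies for every admissible $k$, treating by hand the finitely many smallest degrees should they resist. A secondary point to watch is consistency of the meaning of ``very generic'' across the restriction map and compatibility of the whole construction with the $\mathrm{PGL}_{n+1}$-action, so that fixing an affine chart loses no genericity.
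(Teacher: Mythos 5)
Your overall strategy --- extend $\xi$ to a foliation of $\mathbb{P}^n$, transfer genericity, and pull the quasi-minimality back to the affine chart --- is the natural one, and the final ``transfer of the conclusion'' paragraph is formally fine. But there is a genuine gap earlier, in step (b) of your genericity transport, and it is precisely the ``detail'' that forces the paper to argue differently.

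You assert that for a very generic $\xi$ the hyperplane at infinity $H$ is not invariant by $\overline{\mathcal F}_\xi$, this being ``a nonempty and hence generic condition.'' The truth is the opposite. Writing $\xi=\sum a_i\partial_{x_i}$ with $\deg a_i\le d$, the extension fails to leave $H$ invariant exactly when the degree-$d$ homogeneous part of $\xi$ is a multiple $g\cdot R$ of the radial vector field; this is a proper \emph{linear} subspace of $\mathcal D(n,d)$, so the generic degree-$d$ vector field extends to a degree-$d$ foliation of $\mathbb{P}^n$ that \emph{does} leave $H$ invariant (the paper recalls this, identifying $\mathcal D(n,d)$ with $H^0(\mathbb{P}^n, T\mathbb{P}^n(-\log H)\otimes\mathcal O(d-1))$). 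Consequently $\overline{\mathcal F}_\xi$ always has an algebraic invariant hypersurface, hence leaves that are not Zariski dense, hence can never be a ``very generic'' point of $\mathbb{P}H^0(\mathbb{P}^n, T\mathbb{P}^n\otimes\mathcal L^{\otimes k})$ in the sense needed for Theorem~\ref{T:1}: the countable union of bad loci excluded there contains, via Theorem~\ref{T:CPbis}, the entire locus of foliations with an invariant hypersurface, and the image of $\mathcal D(n,d)$ lands inside it. For the same reason your dominance claim fails in the direction you need it: the affine traces of degree-$k$ foliations not tangent to $H$ have radial top-degree part, so the restriction map is not dominant onto $\mathcal D(n,d)$ for any $k$. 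Theorem~\ref{T:1} therefore cannot be invoked as a black box.

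The paper's actual route is to replace the absolute density theorem by a logarithmic one (Theorem~\ref{T:CPbisrel}: for very generic $\mathcal F\in\mathbb{P}H^0(X,TX(-\log H)\otimes\mathcal L^{\otimes k})$ every leaf \emph{not contained in} $H$ is Zariski dense; for $X=\mathbb{P}^n$ this is Coutinho's Theorem~4.2 and holds for all $k\ge1$, which also disposes of your degree-matching worry), and then to rerun the two key steps of the proof of Theorem~\ref{T:1} --- Propositions~\ref{P:multd} and~\ref{P:vero} --- directly for $\mathcal F_\xi$, using a non-resonant singularity in the affine chart with a separatrix not contained in $H$. You correctly flagged the $k\gg0$ versus $d\ge2$ mismatch and proposed revisiting the proof on $\mathbb{P}^n$, which is in the right spirit; but the missing idea is the logarithmic/relative formulation needed to accommodate the unavoidable invariant hyperplane at infinity.
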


Even when specialized to $n=3$, our proof is very  different from the one of Coutinho.

\subsection{Acknowledgements} I am  grateful to  S. C. Countinho for, since 2003,  bringing periodically to my attention  Bernstein-Lunts conjecture.

\section{Characteristic varieties and prolongations}

\subsection{Characteristic variety}\label{SS:CV}

Let $X$ be a quasi-projective manifold and $\mathcal F$ be a foliation on $X$ with
cotangent bundle $\mathcal L$, that is $\mathcal F = [ \xi ] \in \mathbb P H^0(X, TX \otimes \mathcal L)$
with the representative $\xi$ having no  divisorial components in its singular set. As in the introduction
set $X_0 =  X \setminus \mathrm{sing}(\mathcal F)$.

Contraction with the twisted vector field $\xi$ determines a morphism of $\mathcal O_X$-modules
\[
T^*X \longrightarrow \mathcal L
\]
whose kernel is $N^* \mathcal F$,  the conormal sheaf of $\mathcal F$. At points $x \in X_0$ the sheaf  $N^* \mathcal F$ is clearly locally free, but
it is not locally free in general. For example it is never locally free at an isolated singularity of $\mathcal F$ as
one can promptly verify. Nevertheless, the restriction of $N^* \mathcal F$ at $X_0$  determines a subbundle of $T^* X_0$ of corank one.
As mentioned in the introduction $\mathrm{ch}(\mathcal F)$ is defined as the closure in $E(T^*X)$ of $E(N^* \mathcal F_{|X_0})$. We will
use $\pi$ to denote  the natural projection  $\pi : E(T^*X) \to X$ as well as its restriction  $\pi : \mathrm{ch}(\mathcal F) \to X$.

\medskip

If $(x_1, \ldots, x_n)$ are local coordinates at a open subset $U\subset X$ then  the vector fields
$\{ \partial_{x_i} = \frac{\partial}{\partial x_i} \}$  can be thought as linear coordinates on $T^* U$: the
value of $\partial_{x_i}$ at a $1$-form $\omega \in T^* U$ is given by the contraction $\omega(\partial_{x_i})$.
Thus, if we set $y_i = \partial_{x_i}$ then $(x_1, \ldots, x_n, y_1, \ldots, y_n)$ are global coordinate functions
for $T^*U$. In particular, if $\xi = \sum a_i \partial_{x_i}$ then
\[
\mathrm{ch}( \mathcal F )_{|\pi^{-1}(U)} = \left\{ \sum a_i y_i = 0 \right\}   \, .
\]

\medskip

The singular set of $\mathrm{ch}(\mathcal F)$ is contained in $\pi^{-1}(\mathrm{sing}(\mathcal F))$ and contains
$\pi^{-1}(\mathrm{sing}(\mathcal F)) \cap X$, where $X$ sits inside $E(T^*X)$ as the zero section. Thus, unless $\mathcal F$
is a smooth foliation, $\mathrm{ch}(\mathcal F)$ is always singular. It follows promptly from the above local expression of
$\mathrm{ch}(\mathcal F)$ that its singular points away from the zero section and over a fiber $\pi^{-1}(p)$ are the $1$-forms
at $T^*_p X$ which annihilates the image of $D \xi (p)$. Thus, if the singular scheme of $\mathcal F$ is reduced
and of dimension zero then $\mathrm{ch}(\mathcal F)$ is smooth away from the zero section.

\subsection{Prolongation} Recall that $T^* X$ is endowed with a canonical symplectic structure which, in the above local
coordinates, is induced by the $2$-form $$\Omega = \sum dx_i \wedge dy_i.$$ If $F$ is a  holomorphic
function on (an open subset of) $T^*X$ then the hamiltonian of $F$ is by definition the vector field $\xi_F$ determined by
the formula
\[
   d F(\cdot)  = \Omega(\xi_F, \cdot) \, .
\]
Notice that the vector field $\xi_F$ is tangent to the hypersurface determined by $F$ since  $\xi_F( F) = 0$.
  Leibniz's rule implies that  $\xi_{uF} = u\xi_F + F \xi_u$.Consequently the restriction
of the direction field determined by $\xi_F$ to $\{ F=0\}$ is the same as the one of determined by $\xi_{uF}$ for an arbitrary unit $u$.
Therefore, the symplectic structure determines a one-dimensional foliation on  any reduced and irreducible hypersurface $H \subset T^*X$:
one has just to  factor
out eventual divisorial components of the singular set of  ${\xi_F}_{|H}$
to end up with a foliation on $H$, usually called in the literature the characteristic foliation of $H$. When $H = \mathrm{ch}(\mathcal F) \subset T^*X$,
 is the characteristic variety of a foliation $\mathcal F$ on $X$  we will denote  its characteristic foliation by $\mathcal F^{(1)}$ and
call it  the first prolongation  of $\mathcal F$.

If $U \subset X$ is an open set with coordinates as in \S\ref{SS:CV} and  $\xi = \sum a_i \partial_{x_i}$ is a  vector field
inducing $\mathcal F$ on $U$ then the vector field
\begin{equation}\label{E:explicito}
\hat{\xi} = \sum_{i=1}^n a_i \partial_{x_i} -  \sum_{i,j=1}^n \left(\partial_{x_j} a_i \right) y_i \partial_{y_j}
\end{equation}
is the hamiltonian vector field of $\sum a_i y_i$, and hence defines the prolongation of $\mathcal F_{|U}$.

\section{Warm-up: Proof of Theorem \ref{T:1} in  dimension three}

In this section we present a proof of Theorem \ref{T:1} in dimension three. We believe this will make the general case easier to understand.

\subsection{Making sense of the $\mathcal F^{(1)}$-invariance} We start by clarifying the meaning of $\mathcal F^{(1)}$-invariance.
The first result is well-known and holds in arbitrary dimension.

\begin{lemma}
If $Y \subset \mathrm{ch}(\mathcal F)$ is  $\mathcal F^{(1)}$-invariant then $\pi(Y)$ is $\mathcal F$-invariant.
\end{lemma}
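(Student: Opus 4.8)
The plan is to derive everything from a single observation: the hamiltonian vector field $\hat\xi$ of \eqref{E:explicito}, which generates a multiple of $\mathcal F^{(1)}$, has $\xi$ as its image under $d\pi$, since its $\partial_{x_i}$-components are exactly the $a_i$. So $\pi$ should intertwine $\hat\xi$ and $\xi$, and $\mathcal F^{(1)}$-invariance will then descend along $\pi$ to $\mathcal F$-invariance.

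First I would unwind the hypothesis in a chart $T^*U$ as in \S\ref{SS:CV}. The vector field $\hat\xi$ is tangent to $\mathrm{ch}(\mathcal F)=\{\sum a_i y_i=0\}$ (it is the hamiltonian of $\sum a_i y_i$, so it annihilates this function), hence restricts to a vector field on $\mathrm{ch}(\mathcal F)$; and $\mathcal F^{(1)}$ is generated, over $U$, by the vector field $v$ obtained from $\hat\xi|_{\mathrm{ch}(\mathcal F)}$ after factoring out the divisorial part of its zero locus, say $\hat\xi|_{\mathrm{ch}(\mathcal F)}=g\,v$. Saying that $Y\subset\mathrm{ch}(\mathcal F)$ is $\mathcal F^{(1)}$-invariant means that $v$ is tangent to $Y$, i.e. $v(I_Y)\subseteq I_Y$ for the radical ideal $I_Y\subset\mathcal O_{\mathrm{ch}(\mathcal F)}$ of $Y$; multiplying by $g$ gives the (weaker, but sufficient) statement $\hat\xi(I_Y)\subseteq I_Y$.

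Second comes the key computation, and then the conclusion. If $f$ is a local function on $X$, then $\pi^*f=f\circ\pi$ involves no $y$-variables, so $\partial_{y_j}(\pi^*f)=0$ for all $j$, and \eqref{E:explicito} gives
\[
\hat\xi(\pi^*f)\;=\;\sum_{i=1}^n a_i\,\partial_{x_i}f\;=\;\pi^*(\xi f)\,,
\]
so $\pi$ intertwines $\hat\xi$ and $\xi$. Now set $Z=\overline{\pi(Y)}$ (the closure is needed only because $\pi$ is not proper, and changes nothing). Given $f$ in the ideal $I_Z$ on a chart, $\pi^*f$ vanishes on $Y$, hence $\pi^*f\in I_Y$; by the previous step $\hat\xi(\pi^*f)\in I_Y$, and $\hat\xi(\pi^*f)=\pi^*(\xi f)$, so $\xi f$ vanishes on $\pi(Y)$ and therefore on $Z$. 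As $I_Z$ is radical, $\xi f\in I_Z$; since $f$ and the chart were arbitrary, $\xi(I_Z)\subseteq I_Z$, that is, $Z=\pi(Y)$ is $\mathcal F$-invariant.

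There is no genuine obstacle here; this is a soft, formal statement. The only points requiring care are (i) keeping track of the factor $g$ relating $\hat\xi$ to the honest generator $v$ of $\mathcal F^{(1)}$, so that the invariance passes from $v$ to $\hat\xi$ in the direction one actually needs, and (ii) the passage to the Zariski closure of $\pi(Y)$ and the behaviour over $\mathrm{sing}(\mathcal F)$ — neither of which interferes with the ideal-theoretic argument above.
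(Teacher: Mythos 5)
Your proposal is correct and rests on the same key observation as the paper's proof: equation \eqref{E:explicito} shows that $d\pi$ sends $\hat{\xi}$ to $\xi$, so the prolongation projects onto $\mathcal F$. You simply carry out the resulting deduction ideal-theoretically (via $\hat\xi(\pi^*f)=\pi^*(\xi f)$ and radical ideals) where the paper argues with tangent spaces, which is a harmless and slightly more careful rendering of the same argument.
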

\begin{proof}
If $p$ is a smooth point of $\mathrm{ch}(\mathcal F)$ then equation (\ref{E:explicito}) makes
clear that $\pi$ sends  $T_p\mathcal F^{(1)}$ into $T_\pi(p) \mathcal F$, and that the restriction of
$\mathcal F^{(1)}$ to the zero section is nothing more than $\mathcal F$. Together  these two facts promptly imply
the lemma.
\end{proof}

Our next result holds only in dimension three, and it is the lack of a direct analogue in higher dimensions
which will make the proof in the general case more involved.

\begin{prop}\label{P:web}
Suppose $n=3$ and let  $Y \subsetneq \mathrm{ch}(\mathcal F)$ be a homogenous, and irreducible subvariety with dominant projection to $X$.
If $Y$ is $\mathcal F^{(1)}$-invariant  then $\mathcal F$ is tangent to a codimension one web $\mathcal W_{Y}$ on $X$.
\end{prop}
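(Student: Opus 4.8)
The idea is to exploit the special numerology of $n=3$: the cotangent fiber $T_p^*X$ is $3$-dimensional, the characteristic variety $\mathrm{ch}(\cF)$ is a hypersurface in the $6$-dimensional $T^*X$, so the fibers of $\pi:\mathrm{ch}(\cF)\to X$ are $2$-dimensional quadric cones (planes, in fact, since the defining equation $\sum a_i y_i=0$ is linear in $y$). A homogeneous irreducible $Y\subsetneq\mathrm{ch}(\cF)$ dominating $X$ then meets the generic fiber $\pi^{-1}(p)$ in a proper homogeneous subvariety of the plane $\{\sum a_i(p)y_i=0\}\subset\PP(T_p^*X)$, i.e.\ a finite set of points in the projectivized fiber $\PP^1$. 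So after projectivizing the fibers, $Y$ defines a multisection of $\mathrm{ch}(\cF)\to X$ of some degree $d\ge 1$: generically $d$ covectors (up to scalar) $[\omega_1(p)],\dots,[\omega_d(p)]$ at each point $p$, each annihilating $T_p\cF$. Each such covector $\omega_j(p)$ spans, together with $T_p\cF$, a hyperplane (codimension one subspace) of $T_pX$ — here I use that $\omega_j(p)$ vanishes on the line $T_p\cF$, so $\ker\omega_j(p)\supset T_p\cF$ is a codimension one subspace containing the foliation. Thus $Y$ yields a (possibly ramified, degree $d$) field of hyperplanes tangent to $\cF$, which is the raw data of a codimension one $d$-web.

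The substantive point is \textbf{integrability}: a codimension one web needs each of its leaves (locally, the $d$ hyperplane fields) to be involutive, i.e.\ closed under Lie bracket, equivalently each local $1$-form $\omega_j$ satisfies $\omega_j\wedge d\omega_j=0$. This is exactly where $\cF^{(1)}$-invariance of $Y$ enters. I would argue as follows. Work over the generic point of $X$, or on a Zariski open $U$ where $Y\to X$ is étale of degree $d$ and $\mathrm{ch}(\cF)$ is smooth; then $Y$ is smooth and the prolongation $\cF^{(1)}$ restricts to a foliation on $Y$ tangent to the hamiltonian vector field $\hat\xi$ of \eqref{E:explicito}. Pick one of the local branches, giving a section $s:U\to\mathrm{ch}(\cF)$, $p\mapsto(p,\omega(p))$ with $\omega=\sum y_i(s(p))dx_i$ a genuine local $1$-form on $U$ (up to scaling) annihilating $\xi$. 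The claim is that $\cF^{(1)}$-invariance of the branch $s(U)\subset Y$ forces $\omega\wedge d\omega=0$. To see this, unwind what it means for $\hat\xi$ to be tangent to the graph $s(U)$: writing the graph as $\{y_i=\eta_i(x)\}$ where $\eta_i$ are the components of $\omega$ in the $dx_i$ basis, tangency of $\hat\xi=\sum a_i\partial_{x_i}-\sum_{i,j}(\partial_{x_j}a_i)y_i\partial_{y_j}$ to this graph is the system $\sum_k a_k\,\partial_{x_k}\eta_j=-\sum_i(\partial_{x_j}a_i)\eta_i$ for all $j$, i.e.\ $\mathcal L_\xi\omega = -(\operatorname{div}\text{-type correction})\,\omega$ modulo terms along $\xi$ — more precisely the Lie derivative $\mathcal L_\xi\omega$ is a multiple of $\omega$ on $U$. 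Combined with $\iota_\xi\omega=0$ (which holds identically since $s(U)\subset\mathrm{ch}(\cF)$), Cartan's formula gives $\iota_\xi d\omega=\mathcal L_\xi\omega-d(\iota_\xi\omega)=\mathcal L_\xi\omega=(\text{fn})\cdot\omega$, hence $\omega\wedge\iota_\xi d\omega=0$. In dimension three, a $2$-form $d\omega$ with $\omega\wedge\iota_\xi(d\omega)=0$ and $\iota_\xi\omega=0$ satisfies $\omega\wedge d\omega=0$: indeed $\omega\wedge d\omega$ is a $3$-form, and contracting it with $\xi$ gives $(\iota_\xi\omega)d\omega-\omega\wedge\iota_\xi d\omega=0$; since the top form $\omega\wedge d\omega$ is detected by contraction with the nonzero vector $\xi$, it vanishes. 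This is exactly the place where I need $n=3$, and it is the step I expect to be the main obstacle in spirit, though the linear-algebra is short — the reason it fails for $n\ge 4$ is that $\iota_\xi$ no longer detects top forms, which is the structural reason the author warns the general case is "more involved."

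Having established that each local branch $\omega_j$ of $Y$ is integrable, the branches are permuted among themselves by monodromy around the branch locus of $Y\to X$ (since $Y$ is irreducible), so the symmetric data $\omega_1\cdots\omega_d$ descends to a global section of $\operatorname{Sym}^d$ of the appropriate twisted cotangent sheaf on $X$, defining a codimension one $d$-web $\cW_Y$ in the sense of the foliation literature; each leaf hyperplane field contains $T\cF$ by construction, so $\cF$ is tangent to $\cW_Y$. I would then remark that the degenerate cases are harmless: if $d=1$ the "web" is an honest foliation; if the section $\omega$ happens to be generically $d\,\log$ of something the web is still well-defined; and the locus where $\mathrm{ch}(\cF)$ or $Y$ is singular, or where the branches collide, is a proper closed subset that does not affect the construction of $\cW_Y$ as a global geometric object by Hartogs-type extension. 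Finally I would note that the exclusion $Y\neq\mathrm{ch}(\cF)$ is used precisely to guarantee $d$ is finite (a proper subvariety meets the generic fiber in dimension $<1$), and the homogeneity hypothesis is what lets us projectivize the fibers cleanly so that "proper homogeneous subvariety of a plane through the origin" means "finitely many lines."
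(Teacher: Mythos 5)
Your proposal is correct and follows essentially the same route as the paper: first identify $Y$ with the graph of a rational section of $\mathrm{Sym}^k\Omega^1_X$ (finitely many lines in each rank-two fiber of $N^*\mathcal F$), then use $\mathcal F^{(1)}$-invariance to get integrability of each local branch. Your integrability step via Cartan's formula and the injectivity of $\iota_\xi$ on $3$-forms is just an invariant rephrasing of the paper's computation, which straightens $\xi$ to $\partial_{x_1}$ and checks that $a/b$ is constant along the flow.
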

\begin{proof}
Since we are in dimension three, over the smooth locus of $\mathcal F$,  $\mathrm{ch}(\mathcal F)$ is a rank two vector subbundle of
$\Omega^1_X$. A subvariety $Y$ as in the statement, determines  $k$ distinct lines on
$N^* \mathcal F_x$ for  generic points $x \in X$. Therefore $Y$ can be seen as the graph of  a rational section $\varpi$ of $Sym^k \Omega^1_X$. Moreover, the foliation $\mathcal F$ is tangent to the multi-distribution determined
by $\varpi$. Notice that so far, we have not used  the $\mathcal F^{(1)}$-invariance of $Y$, we just explored the fact that
$Y$ is contained in $\mathrm{ch}(\mathcal F)$.

It remains to prove the integrability of the multi-distribution determined by $\varpi$. For that sake we can place ourselves at a neighborhood of a point $x \in X$
where $\varpi$ is holomorphic and equal to the product of $k$ pairwise distinct $1$-forms, say $\omega_1, \ldots, \omega_k$,
and $\mathcal F$ is smooth. Choose a local coordinate system $(x_1, \ldots, x_n)$ where $\mathcal F$ is induced
by the vector field $\xi = \partial_{x_1}$. Hence $\mathcal F^{ (1)} $ is still induced by $\partial_{x_1}$ now
seen as a vector field on the total space of $N^* \mathcal F$.

If $\omega$ is any of the $1$-forms $\{ \omega_i \}_{i \in \underline k}$
then  $\omega = a dx_2 + b dx_3$ for
suitable holomorphic functions $a, b$. Notice that $\omega$ is integrable if and only if the quotient $a/b$ does not depend on
$x_1$. Finally, the $\mathcal F^{(1)}$-invariance  of $Y$ ensures that $a/b$ is constant along the orbits of $\hat{\xi}$ and
thus $\omega$ is integrable and so is the multi-distribution induced by $\varpi$.
\end{proof}

\subsection{Invariant subvarieties from singular points}

\begin{prop}\label{P:aa}
Let $\mathcal F$ be a foliation on $X$ a smooth projective variety of dimension three.
Suppose $\mathcal F$ is tangent to a codimension one web $\mathcal W$. If $p \in \mathrm{sing}(\mathcal F)$ is an
isolated singularity then there exists an   irreducible
$\mathcal F$-invariant subvariety $Y \subsetneq X$ of positive dimension containing $p$.
\end{prop}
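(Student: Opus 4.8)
The plan is to produce $Y$ either as (a component of) the discriminant of the web $\mathcal W$ or as a separatrix of $\mathcal F$ inside one of the local leaves of $\mathcal W$ through $p$. Recall from the proof of Proposition~\ref{P:web} that $\mathcal W$ is cut out by a (twisted) section $\varpi$ of $\mathrm{Sym}^k\Omega^1_X$ which, over a Zariski-open dense $U_0 = X\setminus \Sigma(\mathcal W)$, is locally a product $\omega_1\cdots\omega_k$ of $k$ pairwise distinct, non-vanishing, integrable $1$-forms, and that the tangency of $\mathcal F$ to $\mathcal W$ forces each leaf $\{f_i=\mathrm{const}\}$ of $\omega_i$ to be $\mathcal F$-invariant. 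The first step is to observe that the degeneracy locus $\Sigma(\mathcal W)\subsetneq X$ is Zariski-closed and $\mathcal F$-invariant: the local flow of a vector field inducing $\mathcal F$ sends each leaf of $\mathcal W$ into itself, hence preserves the web $\mathcal W$, hence preserves its intrinsically defined locus $\Sigma(\mathcal W)$ (and likewise the zero scheme of $\varpi$).

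Next I would split into cases. If $p$ lies on a positive-dimensional irreducible component of $\Sigma(\mathcal W)$, or of the zero scheme of $\varpi$, that component is the desired $Y$. If instead $p\notin\Sigma(\mathcal W)$, then in a neighbourhood of $p$ the web is the superposition of $k$ smooth hypersurface germs $H_1,\dots,H_k$, all passing through $p$ and all $\mathcal F$-invariant; restricting $\mathcal F$ to $H_1$ gives a germ of one-dimensional foliation on a smooth surface, with an isolated singularity at $p$ (it does not vanish identically on $H_1$, since $p$ is an isolated singularity of $\mathcal F$), so the Camacho--Sad separatrix theorem produces a germ of $\mathcal F$-invariant curve $C\subset H_1$ through $p$. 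The remaining possibility, that $p$ is an isolated point of $\Sigma(\mathcal W)$, I would dispose of by first blowing up $p$ and transporting $\mathcal F$ and $\mathcal W$ to the blow-up --- the exceptional $\mathbb{P}^2$ being $\mathcal F$-invariant or not --- which either yields an invariant germ over $p$ as before or strictly decreases the local complexity, allowing an induction.

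It then remains to promote the germ $C$ (or $H_1$) to a genuine closed, irreducible, positive-dimensional subvariety of $X$ containing $p$; equivalently, to check that its Zariski closure $\overline{C}$ --- which is automatically $\mathcal F$-invariant and contains $p$ --- is a proper subvariety. I would try to settle this by exploiting that $Y\subset\mathrm{ch}(\mathcal F)$, and hence $\mathcal W=\mathcal W_Y$, is algebraic and finite over $X$, in order to control the algebraicity of the relevant leaves, or, failing that, again by a sequence of blow-ups over $p$. This globalization --- ruling out a Zariski-dense leaf of $\mathcal W$ through $p$ --- is the step I expect to be the main obstacle; everything preceding it is a matter of the standard local structure of codimension-one webs, the $\mathcal F$-invariance of $\Sigma(\mathcal W)$, and the classical Camacho--Sad theorem.
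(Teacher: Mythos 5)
Your opening moves match the paper's: the discriminant $\Delta(\mathcal W)$ is an $\mathcal F$-invariant hypersurface (so, being a divisor, it has no isolated points and your blow-up case does not arise), and if $p$ lies on it you are done. The divergence, and the genuine gap, is in the case $p\notin\Delta(\mathcal W)$. There you take the local decomposition $\mathcal W=\mathcal G_1\boxtimes\cdots\boxtimes\mathcal G_k$, treat the $\mathcal G_i$ as smooth at $p$ with smooth invariant leaves $H_i$ through $p$, and apply Camacho--Sad on $H_1$ to get a separatrix germ $C$. The globalization you flag as ``the main obstacle'' is not an obstacle that can be overcome along this route: a separatrix germ is only an analytic curve, and its Zariski closure need not be proper. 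Indeed, by Theorem \ref{T:CP} a very generic $\mathcal F$ has \emph{no} proper invariant algebraic subvariety, yet it still possesses smooth separatrix germs at each of its non-resonant singular points (this is exactly the Lemma of Section 5). So any argument whose output is merely a local separatrix can never produce the algebraic $Y$ required; the web hypothesis must be used again at this stage, and in your sketch it is not. The remark that $Y$ is finite over $X$ does not help either: algebraicity of a web does not make its leaves algebraic (consider $x\,dy-\lambda y\,dx=0$ with $\lambda\notin\mathbb Q$).

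The paper resolves this case differently, and this is the idea you are missing: the ``smooth leaf through $p$'' situation simply does not occur. Let $\mathcal G$ be one of the local foliations, induced by a $1$-form $\omega$ without divisorial zeros satisfying $\omega(\xi)=0$, where $\xi$ generates $\mathcal F$ and has an isolated zero at $p$. By the De Rham--Saito division lemma (Cerveau's argument) one writes $\omega=i_{\xi}i_{\zeta}\,dx\wedge dy\wedge dz$ for some vector field $\zeta$. Hence $\omega$ vanishes at $p$, and its zero locus is cut out by the $2\times 2$ minors of a $3\times 2$ matrix, so every component of it through $p$ has codimension at most two, i.e.\ positive dimension. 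This singular locus sits inside the zero scheme of the global algebraic form $\varpi$ defining $\mathcal W$, hence is algebraic, and it is $\mathcal F$-invariant; it is the sought $Y$. In short, rather than hunting for an algebraic separatrix inside a smooth invariant leaf, one shows that the tangency of $\mathcal F$ to $\mathcal G$ forces $\mathcal G$ to be singular along a positive-dimensional algebraic set through $p$.
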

\begin{proof}
Suppose $\mathcal W$ is a $k$-web with $k\ge 1$.
If $k \ge 2$, let $\Delta(\mathcal W) \subset X$ be the discriminant of the web $\mathcal W$. By definition, $\Delta(\mathcal W)$ is the
set  where $\mathcal W$ is not the product of $k$ pairwise transverse foliations. The
proof of Proposition \ref{P:web} tell us that on a neighborhood of a  smooth point of $\mathcal F$, the web
$\mathcal W$ is induced by a $k$-symmetric $1$-form $\varphi = \sum a_{ij} dx_2^i dx_3^j$ . Thus $\Delta(\mathcal W)$
is defined as the hypersurface cut out by the discriminant of $\varpi$, seen as a the binary form on
the variables $dx_2, dx_3$. Notice that $\Delta(\mathcal W)$ is a $\mathcal F$-invariant hypersurface.

If $p$ belongs to $\Delta(\mathcal W)$ we are done. Otherwise $\mathcal W$, at a neighborhood of $p$, can
be written as the superposition of $k$  foliations, that is $\mathcal W= \mathcal G_1 \boxtimes \cdots
\boxtimes \mathcal G_k$.   So consider one foliation $\mathcal G$ of codimension one
in a neighborhood of $p$ and suppose that $\mathcal F$ is tangent to it.

Let $\xi$ be  holomorphic vector field inducing $\mathcal F$  and $\omega$ be a holomorphic $1$-form inducing
$\mathcal G$ both defined on a neighborhood of $p$ and  without divisorial components in their zero sets.
Since $\mathcal F$ has an isolated singularity at $p$ so does $\xi$.
Consequently, $\omega(\xi)=0$ implies that
$\omega$ is also singular at $p$. At this point we can use an argument laid down by Cerveau in \cite[page 46]{cerveau} that
we now recall. As $\xi$ has isolated singularities we can apply
De Rham-Saito Lemma to ensure the existence of another vector field $\zeta$
such that $\omega = i_{\xi} i_{\zeta} dx\wedge dy \wedge dz$. Therefore the zero set of $\omega$ is formed
by the minors of a $3 \times 2$ matrix and must be of codimension at least two.
But if $\mathcal G$ is one of the foliations $\mathcal G_i$ then $\mathrm{sing}(\mathcal G)$ is algebraic, and
is the sought $\mathcal F$-invariant variety.
\end{proof}

\subsection{Conclusion of the proof} To conclude the proof of Theorem \ref{T:1} in dimension three we will make use of the
following generalization of Jouanolou's Theorem proved in \cite{CP}, see also \cite[Theorem 2]{Lu} for the very same statement on projective spaces.

\begin{thm}\label{T:CP}
Let $ X$ be a smooth projective variety,  $\mathcal L$ be an ample line bundle over it, and $k \gg 0$ be a sufficiently large
integer. If $ \mathcal F \in \mathbb P H^0(X, TX \otimes \mathcal L^{\otimes k})$ is a very generic foliation then, besides $X$ itself,
the only  subvarieties left invariant by $\mathcal F$ are its singular points.
\end{thm}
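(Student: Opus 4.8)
The plan is to prove that the \textbf{bad} foliations --- those admitting an invariant proper subvariety that is not a finite set of points --- lie in a countable union of proper Zariski closed subsets of $\mathbb{P}H^0(X,TX\otimes\mathcal{L}^{\otimes k})$. A routine preliminary, using that $TX\otimes\mathcal{L}^{\otimes k}$ is globally generated and very ample for $k\gg 0$, shows that a very generic $\mathcal F$ has reduced zero-dimensional singular scheme; the only invariant subschemes supported on $\mathrm{sing}(\mathcal F)$ are then its points, so it remains to exclude, for very generic $\mathcal F$, an irreducible invariant subvariety $Y$ with $1\le\dim Y<\dim X$ not contained in $\mathrm{sing}(\mathcal F)$.

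I would organize the argument along the countably many irreducible components $\mathcal{H}$ of $\mathrm{Hilb}(X)$. For each $\mathcal{H}$ form the incidence variety
\[
\mathcal{I}_{\mathcal{H}}=\{(\mathcal F,Y)\in\mathbb{P}H^0(X,TX\otimes\mathcal{L}^{\otimes k})\times\mathcal{H}\ :\ Y\ \text{is}\ \mathcal F\text{-invariant}\}
\]
and let $B_{\mathcal{H}}$ be its image under the first projection; since $\mathcal{H}$ is proper, $B_{\mathcal{H}}$ is Zariski closed, and the bad locus is contained in $\bigcup_{\mathcal H}B_{\mathcal H}$, a countable union, so it suffices to prove each $B_{\mathcal{H}}$ is a \emph{proper} subset. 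For $Y$ reduced, irreducible and not inside $\mathrm{sing}(\mathcal F)$, ``$Y$ invariant'' is the vanishing of the linear map $\rho_Y\colon H^0(X,TX\otimes\mathcal{L}^{\otimes k})\to H^0(Y,N_{Y/X}\otimes\mathcal{L}^{\otimes k}|_Y)$ given by restriction of a vector field to $Y$ followed by projection to the normal sheaf; hence $\dim\mathcal{I}_{\mathcal{H}}\le\dim\mathcal{H}+h^0(X,TX\otimes\mathcal{L}^{\otimes k})-1-\mathrm{rk}\,\rho_Y$, and $B_{\mathcal{H}}$ is proper as soon as $\mathrm{rk}\,\rho_Y>\dim\mathcal{H}$ for a generic $Y\in\mathcal{H}$.

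When the members of $\mathcal{H}$ have $\mathcal{L}$-degree bounded in terms of $k$ this inequality is easy: a Castelnuovo--Mumford regularity bound (depending only on the Hilbert polynomial) makes $\rho_Y$ surjective, so $\mathrm{rk}\,\rho_Y=h^0(Y,N_{Y/X}\otimes\mathcal{L}^{\otimes k}|_Y)$, which is eventually a polynomial in $k$ of degree $\dim Y$ with strictly positive leading coefficient and therefore exceeds $\dim\mathcal{H}$ for $k\gg0$. The \textbf{main obstacle} is the complementary case of invariant $Y$ whose degree is large compared with $k$: there the count degenerates, since $h^0(N_{Y/X}\otimes\mathcal{L}^{\otimes k}|_Y)$ and $\dim\mathcal{H}$ become comparable, both being governed by $\deg Y$. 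To handle it I would invoke a Darboux--Jouanolou dichotomy: a foliation with more invariant hypersurfaces than an explicit bound depending on $(X,\mathcal{L},k)$ --- in particular one possessing invariant hypersurfaces of unbounded degree --- admits a rational first integral; one then checks by a now genuinely unobstructed incidence count (a rational first integral is essentially a pencil of hypersurfaces and pins the foliation down near a general point) that a very generic $\mathcal F$ has no rational first integral, and disposes of the residual finitely-many-but-high-degree invariant hypersurfaces using that the associated cofactor lies in the \emph{fixed} finite-dimensional space $H^0(X,\mathcal{L}^{\otimes k})$, which bounds the relevant incidence variety independently of $\deg Y$.

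Finally, the same incidence-variety count covers invariant $Y$ of arbitrary codimension whenever $\deg_{\mathcal{L}}Y$ is bounded in terms of $k$, so the last remaining case is high-degree $Y$ of codimension $\ge 2$; this I expect to follow from the hypersurface analysis together with a Darboux-type finiteness for invariant subvarieties of each fixed dimension (infinitely many of them inside one component of $\mathrm{Hilb}(X)$ again forcing extra structure --- a fibering-type first integral --- whose genericity-exclusion is an unobstructed count). Assembling all cases, every $B_{\mathcal{H}}$ is a proper Zariski closed set, their union over the countably many components of $\mathrm{Hilb}(X)$ is countable, and a foliation outside it --- a very generic one --- has no invariant subvariety besides $X$ itself and its singular points.
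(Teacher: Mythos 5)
You should first note that the paper does not prove this statement at all: Theorem \ref{T:CP} is quoted as a known result, with proof in \cite{CP} (see also \cite{Lu} for projective space), so there is no internal argument to compare with and your proposal has to stand on its own. Its skeleton is the standard first step and is fine: stratify by the countably many components $\mathcal H$ of the Hilbert scheme, observe that the incidence variety has closed image $B_{\mathcal H}$ by properness of $\mathcal H$, and reduce to showing each $B_{\mathcal H}$ is proper; the surjectivity-plus-dimension count also genuinely works when the invariants of $\mathcal H$ are bounded in terms of $k$. (Even there, be careful: bounding $\dim \mathcal I_{\mathcal H}$ needs a stratified count over the loci where $\mathrm{rk}\,\rho_Y$ drops, not just the rank at a generic $Y\in\mathcal H$, since a component of $\mathcal I_{\mathcal H}$ may dominate the foliation space while sitting over a special locus of $\mathcal H$.)

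The genuine gap is exactly where you place the ``main obstacle'': invariant subvarieties of degree large compared with the fixed $k$, which is the actual content of Jouanolou-type theorems, and your treatment of it does not close. The Darboux--Jouanolou dichotomy only concerns foliations with \emph{more} invariant hypersurfaces than a bound; it says nothing about a foliation with a \emph{single} invariant hypersurface of huge degree, and your proposed fix --- that the cofactor lives in the fixed space $H^0(X,\mathcal L^{\otimes k})$ and hence ``bounds the relevant incidence variety independently of $\deg Y$'' --- is not an argument: for fixed $\xi$ and fixed cofactor the invariance equation is linear in the defining equation of $Y$ with kernel of uncontrolled dimension, and nothing in that count prevents the projection to $\mathbb P H^0(X,TX\otimes\mathcal L^{\otimes k})$ from being dominant. (A sanity check that some essential use of $k\gg0$ is missing: for small $k$, e.g.\ linear vector fields, \emph{every} foliation has invariant hypersurfaces, yet your high-degree count never uses $k$.) Likewise ``a very generic $\mathcal F$ has no rational first integral'' is itself a statement over unbounded degrees of the first integral --- by the failure of Poincaré-type bounds the degree of a first integral is not controlled by $k$ --- so it cannot be disposed of by one unobstructed incidence count; and the high-degree, codimension $\ge 2$ case rests on a ``Darboux-type finiteness'' that is not an available statement in the form you need and is flagged only as an expectation. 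The way the result is actually proved in \cite{CP} (and in \cite{Lu} for $\mathbb P^n$) is to use the per-component closedness exactly as you do, but then to get properness of all the $B_{\mathcal H}$ simultaneously by exhibiting, for $k\gg0$, a single foliation with no positive-dimensional invariant subvariety of \emph{any} degree: one shows (intersection-theoretically, using ampleness for $k\gg0$) that any invariant subvariety must meet $\mathrm{sing}(\mathcal F)$, and then excludes positive-dimensional invariant germs by a local analysis at suitably chosen (non-resonant) singularities. Without an ingredient of this kind, or some other mechanism that kills individual high-degree invariant subvarieties, your argument does not yield the theorem.
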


Let $\mathcal F \in \mathbb P H^0(X, TX \otimes \mathcal L^{\otimes k})$  be a very generic foliation without
invariant subvarieties. As its singular set has cardinality given by the top Chern class of $TX \otimes \mathcal L^{\otimes k}$, and
this number is positive for $k \gg 0$, the singular set of $\mathcal F$ is non-empty. Moreover we can assume the existence of
an isolated singularity  $p \in \mathrm{sing}(\mathcal F)$, see for instance \cite[Proposition 2.4]{CP}.

If the characteristic variety of $\mathcal F$  is not quasi-minimal then Proposition \ref{P:web} implies
that $\mathcal F$ is tangent to a codimension one web $\mathcal W$. Proposition \ref{P:aa} in its turn
implies that $\mathcal F$ has a invariant subvariety through $p$. This contradicts Theorem \ref{T:CP} and
concludes the proof of Theorem \ref{T:1} in dimension three. \qed

\subsection{Obstructions to generalize} To  generalize the argument above to deal with the general case  one has to circumvent the following obstructions:
\begin{enumerate}
\item Proposition \ref{P:web} does not generalize because irreducible components of $ch(\mathcal F)$ which are homogenous and dominate the base $X$ are no longer graphs of multi-distributions as happens in the three dimensional case; and
\item Proposition \ref{P:aa} does not generalize since (multi)-distributions with infinitesimal automorphisms are not necessarily integrable.
\end{enumerate}

To accomplish that  we will take advantage of the   structure  of generic
foliation singularities combined with the following reinterpretation of Theorem \ref{T:CP}.

\begin{thm}\label{T:CPbis}
Let $ X$ be a smooth projective variety,  $\mathcal L$ an ample line bundle over it, and $k \gg 0$ a sufficiently large
integer. If $ \mathcal F \in \mathbb P H^0(X, TX \otimes \mathcal L^{\otimes k})$ is a very generic foliation then
every leaf of $\mathcal F$ is Zariski dense.
\end{thm}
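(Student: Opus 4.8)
The plan is to deduce Theorem~\ref{T:CPbis} from Theorem~\ref{T:CP} by arguing that the Zariski closure of a leaf is an $\mathcal F$-invariant subvariety, and then invoking the classification of such subvarieties. Concretely, let $\mathcal F$ be a very generic foliation as in Theorem~\ref{T:CP}, let $L$ be a leaf, and let $\overline{L}$ denote its Zariski closure in $X$. The first step is to record that $\overline{L}$ is an irreducible $\mathcal F$-invariant subvariety of $X$: it is irreducible because $L$ is connected (a leaf of a one-dimensional foliation is a Riemann surface), and it is $\mathcal F$-invariant because the tangency condition $\xi_x\in T_x\mathcal F\subset T_x\overline L$ holds on the Zariski-dense subset $L$ of $\overline L$ away from $\sing(\mathcal F)$, hence on all of $\overline L$ by continuity. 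Here I would be slightly careful to state the invariance in the scheme-theoretic sense used earlier, but this is routine.

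The second step is to apply Theorem~\ref{T:CP}: the only $\mathcal F$-invariant subvarieties of $X$ are $X$ itself and the isolated singular points of $\mathcal F$. So $\overline L$ is either all of $X$ — in which case $L$ is Zariski dense and we are done — or $\overline L$ is a single point $p\in\sing(\mathcal F)$. The remaining task is to rule out the latter: a leaf cannot have zero-dimensional Zariski closure. This is where a small amount of genuine argument is needed. If $\overline L=\{p\}$ then $L\subseteq\{p\}$, so $L=\{p\}$; but $p$ is a singular point of $\mathcal F$, and by definition a leaf of $\mathcal F$ lives in $X_0=X\setminus\sing(\mathcal F)$, so $p\notin L$, a contradiction. (Equivalently: every leaf is a positive-dimensional complex manifold, hence cannot be contained in a finite set.) Therefore $\overline L=X$ for every leaf $L$.

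I expect the only subtlety — and it is minor — to be bookkeeping about where leaves are defined and what "invariant subvariety" means: one must make sure the notion of leaf excludes singular points (so that the dimension argument in the last step is valid) and that the closure of a leaf genuinely qualifies as an invariant subvariety in the sense of Theorem~\ref{T:CP}. There is no hard analytic or geometric obstacle here; Theorem~\ref{T:CPbis} is essentially a restatement of Theorem~\ref{T:CP} phrased in terms of leaves rather than invariant subvarieties, and the proof is a two-line deduction once the definitions are unwound. The reason it is singled out as a separate statement is presumably its convenience for the higher-dimensional argument, not any additional depth.
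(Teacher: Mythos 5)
Your deduction is correct and is exactly what the paper intends: Theorem~\ref{T:CPbis} is presented there as a ``reinterpretation'' of Theorem~\ref{T:CP} with no separate proof, the implicit argument being precisely that the Zariski closure of a leaf is an irreducible $\mathcal F$-invariant subvariety which, being positive-dimensional, must equal $X$. Your write-up supplies that routine deduction correctly, so there is nothing to add.
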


\section{Prolongation versus holonomy}

In this section  $\mathcal F$ will be a {\bf smooth}  foliation of dimension one   on
a complex manifold $X$.

\subsection{Holonomy}
To each leaf $L$ of $\mathcal F$, once a point $p \in L$ and a germ $(\Sigma,p)$ of smooth hypersurface transverse to
$\mathcal F$  are fixed, one can associate a (anti)-representation
\[
hol(L) : \pi_1( L, p) \longrightarrow \mathrm{Diff}(\Sigma,p)  \, ,
\]
as follows. Given a  closed path $\gamma$ contained in $L$ and centered at $p$ one defines a germ diffeomorphism
$h_{\gamma} \in   \mathrm{Diff}(\Sigma,p)$ such that $h_{\gamma}(x)$ is  the end point of a lift of $\gamma$ to the leaf of $\mathcal F$ through
$x$.  The result does not
depend on the choices involved in the process and is completely determined by the class of $\gamma$ in $\pi_1(L,p)$.  Thus one set
$hol(L)(\gamma) = h_{\gamma}$. It is an anti-representation since
$h_{\gamma_1 \cdot \gamma_2} = h_{\gamma_2} \circ h_{\gamma_1}$.

Of course, one can also consider the linear holonomy of $L$ which is just the anti-representation
\begin{align*}
Dhol(L) : \pi_1( L, p) &\longrightarrow GL(T_p \Sigma)   \, \\
[\gamma] &\longmapsto Dh_{\gamma}(p) \, .
\end{align*}

Being a anti-representation of $\pi_1(L)$ onto a general linear group it is natural to
wonder if there is a natural connection on a natural vector bundle over $L$ inducing. It is indeed
the case, and even better, there is a partial connection along the tangent bundle $T \mathcal F$ of $\mathcal F$
on the normal bundle $N\mathcal F$ which has monodromy along the leaves of $\mathcal F$ equivalent to the linear
holonomy.

\subsection{Bott's partial connection}
Let $\rho: TX \to N \mathcal F$ be the natural projection. Of course $ker \rho = T \mathcal F$. Bott's partial connection
is defined as follows
\begin{align*}
\nabla : T \mathcal F &\longrightarrow Hom(N\mathcal F, N \mathcal F) \simeq N^* \mathcal F \otimes N \mathcal F \\
\xi &\longmapsto \{ \vartheta \mapsto \rho ( [ \hat{\vartheta}, \xi ] ) \} \, ,
\end{align*}
where $\hat{\vartheta}$ stands for an arbitrary lift of $\vartheta$ to $TX$. The involutiveness  of $T\mathcal F$
implies that $\rho ( [ \hat{\vartheta}, \xi ] )$ does not depend on the choice of the lift, and ensures that $\nabla$
is well defined.

Let us now proceed to write explicitly the restriction of $\nabla$ to a leaf $L$ of
$\mathcal F$. We will work in local coordinates $(x_1,x_2,\ldots, x_{n})$ and will assume
that $L = \{ x_2= \ldots = x_{n} = 0 \}$. Since $L$ is invariant by $\mathcal F$, we can
write a vector field $\xi$  generating $T\mathcal F$ in the following form
\[
\xi = a(x) \partial_{x_1} + \sum_{i=2}^{n} \sum_{j=2}^{n} a_{ij}(x) x_i \partial_{x_j} \, .
\]

Notice that vector fields  $\partial_{x_2}, \ldots, \partial_{x_n}$ can be interpreted as a basis
of $N\mathcal F$. Thus
\[
 \nabla (\xi) ( \partial_{x_i} )  = \rho \left( \partial_{x_i}a(x) \partial_{x_1} + \sum_{i=2}^{n} \sum_{j=2}^{n}  (\partial_{x_i} a_{ij}(x) ) x_i \partial_{x_j}  +
 + \sum_{j=2}^{n}  a_{ij}(x)  \partial_{x_j} \right) \, .
\]
Hence, the induced connection $\nabla_{|L} : TL \to N^* L \otimes N L$ is
\begin{align}
\nabla_{|L}(\xi) & = \sum_{i=1}^{n-1} \sum_{j=2}^{n}   a_{ij}(x_1,0) dx_i \otimes \partial_{x_j} \nonumber  \\
& = (dx_2, \ldots, dx_n) \cdot A(x_1,0) \cdot (\partial_{x_2}, \ldots, \partial_{x_n} )^T \, . \label{E:eqA}
\end{align}

\subsection{Comparison with the prolongation}In order to compare with Bott's connection, let us now write down
the restriction to $\pi^{-1}(L)$ of the  lift of $\xi$ to $E(N^* \mathcal F)$.  We will use the same system of coordinates used in Section \ref{SS:CV}, where
$y_i = \partial_{x_i}$.  Since in these coordinates $\pi^{-1}(L)   = \{ y_1 = x_2= \ldots = x_n= 0 \}$, we can write
\begin{equation*}
\hat{\xi}_{|\pi^{-1}(L)} = a(x_1,0) \partial_{x_1} - \sum_{i,j=2}^n \left(a_{ji}(x_1,0) \right) y_i \partial_{y_j}
\end{equation*}
which in matrix form is
\[
\hat{\xi}_{|\pi^{-1}(L)} = a(x_1,0) \partial_{x_1} - (y_2, \ldots, y_n) \cdot  A^T(x_1,0) \cdot (\partial_{y_2}, \ldots, \partial{y_n})^T
\]
with $A(x_1,0)$ being the same matrix as in (\ref{E:eqA}).
It is then clear, that in these coordinates, the leaves of $\mathcal F^{(1)}$ restricted to $\pi^{-1}(L)$ are flat sections
of the connection on $N^* L$ having connection matrix  $-A^T$, where $A$ is the connection matrix on $\nabla_{|L}$.  We have thus
prove the following

\begin{prop}
The leaves of $\mathcal F^{(1)}$ are flat sections of the partial connection dual to Bott's partial connection.
\end{prop}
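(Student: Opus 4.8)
The plan is to observe that all the work has essentially been done in the preceding two subsections, and that the proposition is a matter of correctly matching up the two local descriptions already computed. First I would note that the statement is local along each leaf $L$ of $\mathcal F$: a partial connection along $T\mathcal F$ on a bundle over $X$ restricts, on each leaf, to an honest connection on the restricted bundle, and flat sections are a local-along-$L$ notion. So it suffices to compare, in the coordinates of \S\ref{SS:CV}, the foliation $\mathcal F^{(1)}$ restricted to $\pi^{-1}(L)$ with the dual of Bott's connection restricted to $L$.

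The key computation is already recorded: with $L = \{x_2 = \cdots = x_n = 0\}$ and $\xi = a(x)\partial_{x_1} + \sum_{i,j\ge 2} a_{ij}(x) x_i \partial_{x_j}$ a generator of $T\mathcal F$, equation (\ref{E:eqA}) shows that $\nabla_{|L}$ has connection matrix $A(x_1,0)$ in the frame $(\partial_{x_2}, \ldots, \partial_{x_n})$ of $NL$; and the explicit formula for $\hat\xi_{|\pi^{-1}(L)}$ shows that, in the dual frame $(y_2, \ldots, y_n) = (\partial_{x_2}, \ldots, \partial_{x_n})$ of $N^*L$, the prolongation is the horizontal lift for the connection with matrix $-A^T(x_1,0)$. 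So I would simply spell out the standard fact that the connection induced on the dual bundle $N^*\mathcal F$ by a connection with matrix $A$ has matrix $-A^T$, conclude that the horizontal distribution of $\nabla^*_{|L}$ coincides with $T\mathcal F^{(1)}_{|\pi^{-1}(L)}$, and hence that the integral leaves of $\mathcal F^{(1)}$ over $L$ are exactly the flat (horizontal) sections of $\nabla^*_{|L}$. Letting $L$ vary over all leaves gives the statement globally.

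There is essentially no obstacle here; the only point requiring a line of care is the book-keeping of signs and transposes — that the leaves of $\mathcal F^{(1)}$ are flat sections of the connection with matrix $-A^T$ (the dual connection), rather than some other combination like $-A$ or $A^T$. This is already flagged in the sentence preceding the proposition, so the proof is a one-line invocation of that computation together with the definition of the dual partial connection. I would therefore write:

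\begin{proof}
As remarked above, a partial connection along $T\mathcal F$ restricts to each leaf $L$ of $\mathcal F$ to a genuine connection on the restriction of the corresponding bundle to $L$, and the property of being a flat (horizontal) section is local along $L$; likewise the leaves of $\mathcal F^{(1)}$ lying over $L$ are, over a coordinate neighbourhood, the integral curves of $\hat\xi_{|\pi^{-1}(L)}$. It therefore suffices to compare the two in the coordinates of \S\ref{SS:CV}. By (\ref{E:eqA}) the connection $\nabla_{|L}$ has connection matrix $A(x_1,0)$ in the frame $(\partial_{x_2}, \ldots, \partial_{x_n})$ of $NL$, so the dual connection on $N^* L$ has connection matrix $-A^T(x_1,0)$ in the dual frame $(dx_2, \ldots, dx_n)$. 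Comparing with the displayed expression for $\hat\xi_{|\pi^{-1}(L)}$, in which $y_i$ plays the role of $dx_i$, we see that $\hat\xi_{|\pi^{-1}(L)}$ is precisely the horizontal lift of $\xi$ for this dual connection. Hence the integral curves of $\hat\xi$ over $L$ — that is, the leaves of $\mathcal F^{(1)}$ — are the flat sections of the partial connection dual to Bott's partial connection. Since $L$ was an arbitrary leaf of $\mathcal F$, the proposition follows.
\end{proof}
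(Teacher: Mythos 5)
Your proof is correct and follows the same route as the paper: the paper's own argument is exactly the comparison of the connection matrix $A(x_1,0)$ from (\ref{E:eqA}) with the matrix $-A^T(x_1,0)$ appearing in the displayed formula for $\hat{\xi}_{|\pi^{-1}(L)}$, which is then identified as the dual connection. Your write-up merely makes explicit the locality-along-$L$ reduction and the standard fact about dual connection matrices, both of which the paper leaves implicit.
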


\section{From invariant subvarieties to multi-distributions}

\subsection{Non-resonant singularities}

Let $\mathcal F$ be a germ of one-dimensional  foliation on $(\mathbb C^n,0)$. Suppose that
it has an isolated singularity at the origin. Suppose  also that the linear part $D\xi(0)$ of a vector field
$\xi$ inducing $\mathcal F$ is invertible and its eigenvalues $\lambda_1, \ldots, \lambda_n$
generate a $\mathbb Z$-module  of rank $n$. We will say that a singularity of this form is a {\bf non-resonant singularity}.

\begin{lemma}
There exists $n$ germs of $\mathcal F$-invariant smooth curves  $\gamma_i : (\mathbb C,0) \to (\mathbb C^n,0)$ with
tangents at zero determined by the eigenvectors of $D\xi(0)$.
\end{lemma}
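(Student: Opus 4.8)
The plan is to produce the invariant curves by a standard normal‑form / Poincaré–Dulac type argument, exploiting the non‑resonance hypothesis to control denominators. Fix a vector field $\xi$ inducing $\mathcal F$ with $D\xi(0)$ invertible and eigenvalues $\lambda_1,\dots,\lambda_n$ generating a $\mathbb Z$‑module of rank $n$. After a linear change of coordinates we may assume $D\xi(0)$ is diagonal, so
\[
\xi = \sum_{i=1}^n \bigl(\lambda_i x_i + \text{h.o.t.}\bigr)\,\partial_{x_i}\, .
\]
For each index $i$ we seek a formal curve $\gamma_i(t) = (x_1(t),\dots,x_n(t))$ with $x_i(t) = t$ and $x_j(t) = \sum_{m\ge 2} c^{(i)}_{j,m} t^m$ for $j\ne i$, tangent at $0$ to the $i$‑th coordinate axis (the eigenvector of $\lambda_i$), and invariant by $\mathcal F$ in the sense that $\xi$ restricted along the curve is proportional to $\dot\gamma_i$.

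First I would set up the invariance equation order by order. Writing $\xi = \sum_j f_j(x)\,\partial_{x_j}$ with $f_j(x) = \lambda_j x_j + O(\|x\|^2)$, the condition that $\gamma_i$ be a leaf is that there is a scalar function $g(t)$ with $f_j(\gamma_i(t)) = g(t)\,\dot x_j(t)$ for all $j$; normalizing via the $i$‑th component, $g(t) = f_i(\gamma_i(t))$, and the remaining equations become, comparing coefficients of $t^m$,
\[
(\lambda_j - m\lambda_i)\, c^{(i)}_{j,m} = (\text{polynomial in the } c^{(i)}_{j',m'} \text{ with } m' < m)\, .
\]
Non‑resonance guarantees $\lambda_j - m\lambda_i \ne 0$ for every $j$ and every integer $m\ge 2$ (indeed for $m\ge 1$ when $j\ne i$, and $m\ge 2$ is automatic since the $t^1$ coefficient is fixed), so each coefficient $c^{(i)}_{j,m}$ is uniquely determined. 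This yields a unique formal $\mathcal F$‑invariant curve through $0$ tangent to each eigendirection.

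The main obstacle is convergence: the recursion solves the formal problem but the small divisors $\lambda_j - m\lambda_i$ could in principle accumulate near $0$ and destroy analyticity. Here, however, we are in the much easier ``one small divisor at a time'' situation — we are straightening a single invariant curve, not conjugating the whole vector field — and the classical result (Briot–Bouquet / the analytic stable‑manifold theorem for a single eigendirection) applies: since $\lambda_i \ne 0$, the ratios $\lambda_j/\lambda_i$ avoid the positive integers $\ge 2$ by non‑resonance, and one has the uniform lower bound $|\lambda_j - m\lambda_i| \ge c\, m$ for large $m$ (because $\lambda_j - m\lambda_i \sim -m\lambda_i$), which is all that is needed to dominate the formal series by a convergent geometric‑type majorant. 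I would invoke this directly rather than redo the majorant estimate. The conclusion is that each $\gamma_i$ converges, giving $n$ germs of $\mathcal F$‑invariant smooth curves with the prescribed tangents at $0$, which is exactly the claim.
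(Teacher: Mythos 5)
Your argument is correct, but it follows a genuinely different route from the paper. The paper does not construct the separatrices coefficient by coefficient: it splits the spectrum of $D\xi(0)$ into two groups each lying in the Poincar\'e domain, invokes the Hadamard--Perron theorem for holomorphic flows to obtain a pair of transverse invariant submanifolds tangent to the corresponding spectral subspaces, and then applies Poincar\'e's linearization theorem to the (non-resonant, Poincar\'e-domain) restrictions of $\xi$; the coordinate axes of the linearized restrictions supply all $n$ separatrices. You instead attack each eigendirection separately: after diagonalizing $D\xi(0)$ (legitimate, since $\mathbb Z$-independence forces the eigenvalues to be pairwise distinct), you write the invariance condition as a graph over the $i$-th axis, obtain the recursion $(\lambda_j-m\lambda_i)\,c^{(i)}_{j,m}=(\text{lower order data})$, observe that non-resonance kills every divisor, and settle convergence by the Briot--Bouquet/single-eigendirection invariant-manifold theorem, correctly noting that $|\lambda_j-m\lambda_i|\sim m|\lambda_i|$ so there is no small-divisor problem. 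Both arguments are complete at the same level of rigor (each delegates the analytic step to a classical theorem). Your version is more elementary and isolates exactly the resonances $\lambda_j=m\lambda_i$ that matter, so it proves the lemma under a weaker hypothesis than full non-resonance; the paper's version yields strictly more along the way, namely an analytic linearization of $\xi$ on invariant submanifolds containing the separatrices, which is in the spirit of the holonomy computations that follow the lemma (although, as it happens, the eigenvalues $\exp(2\pi i\lambda_j/\lambda_i)$ of the linear holonomy can also be read off without it).
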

\begin{proof} The Hadamard–-Perron theorem for holomorphic flows \cite[Chapter 2, Section 7]{YY} ensures the existence
of a pair of  invariant  manifolds intersecting transversely at the origin and  such that the restriction of the vector field to each of them has a non-resonant singularity in the Poincar\'{e} domain (Section 5 loc. cit.). Poincar\'{e} normalization Theorem (loc. cit.)  implies that the corresponding restrictions
of $\xi$ are analytically linearizable. Since separatrices of the restrictions of $\xi$ are also separatrices of $\xi$, the lemma follows.
\end{proof}

The linear holonomy along a positive
oriented path around the origin contained in $\gamma_i(\mathbb C,0)$
is induced by a linearizable matrix $A_i \in GL(n-1,\mathbb C)$ with eigenvalues $\{\exp (2 \pi i  \lambda_j /\lambda_i ) \}_{j\neq i}$.
Moreover, the $\mathbb Z$-independence of the eigenvalues implies that the Zariski closure  of the subgroup of $GL(\mathbb C^{n-1})$ generated
by $A_i$ is a maximal torus $\simeq (\mathbb C^* )^{n-1}$.

\subsection{Singularities and the holonomy of separatrices} Together with Proposition \ref{P:vero}, the proposition below will replace Proposition \ref{P:web}
in the proof of the general case of Theorem \ref{T:1}. It guarantees that
invariant subvarieties of $\mathcal F^{(1)}$ correspond to multi-distributions tangent to $\mathcal F$ as soon as $\mathcal F$
has non-resonant singularities.

\begin{prop}\label{P:multd}
Let $\mathcal F$ be a foliation on  a smooth projective variety $X$  and
let  $Y \subsetneq ch(\mathcal F)$ be an irreducible subvariety with dominant projection to $X$ distinct from the zero section.
Suppose $\mathcal F$ has non-resonant  singularity $p$   and that at least one of its  separatrices  is Zariski dense.
If $Y$ is $\mathcal F^{(1)}$ invariant then the fiber of $Y$ over a generic point of $X$
is a finite union of linear spaces of the same dimension. Consequently $\mathcal F$ is tangent to  a multi-distribution of codimension $q= \dim Y -\dim X \le \dim X -2$.
\end{prop}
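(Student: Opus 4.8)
The strategy is to combine the Hadamard--Perron/Poincar\'e normalization picture around the non-resonant singularity $p$ with the dictionary ``leaves of $\mathcal F^{(1)}$ = flat sections of the connection dual to Bott's partial connection'' established in the previous section. Fix a separatrix $\gamma = \gamma_i$ of $\mathcal F$ through $p$ which is Zariski dense; by the discussion following the Hadamard--Perron lemma, the linear holonomy of $\mathcal F$ along a small positive loop around $p$ inside $\gamma$ is given by a semisimple matrix $A_i \in \GL(n-1,\C)$ whose powers generate a subgroup whose Zariski closure is a maximal torus $T \simeq (\C^*)^{n-1}$ of $\GL(N^*\mathcal F_x)$, where $x \in \gamma$ is a base point near $p$. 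Restricting the $\mathcal F^{(1)}$-invariant subvariety $Y$ to the fiber $\pi^{-1}(x)$, we get a projective subvariety $Y_x \subset \PP(N^*\mathcal F_x)$ (homogeneity of $Y$) which, by the connection description, is invariant under the dual linear holonomy representation, hence under the maximal torus $T$ acting on $\PP(N^*\mathcal F_x) \simeq \PP^{n-2}$.

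The first key step is therefore purely a statement about torus actions: a closed subvariety of $\PP^{n-2}$ invariant under a maximal torus of $\PGL_{n-1}$ is a finite union of coordinate-linear subspaces (with respect to the eigenbasis of $A_i$), all of the same dimension since $Y$ is irreducible and dominates $X$ (so all fibers over a generic point have pure dimension $q = \dim Y - \dim X$). This is standard: the torus-fixed ideal in $\C[y_1,\dots,y_{n-1}]$ is a monomial ideal, and the radical monomial ideals cut out exactly unions of coordinate subspaces. The second key step is to propagate this ``linearity at $x$'' to a generic point of $X$. Here is where Zariski density of the separatrix does the work: the set of points of $\gamma$ where the fiber of $Y$ is a union of coordinate linear spaces is a constructible condition, and it holds on a neighborhood of $x$ in $\gamma$; since the parallel transport of Bott's connection along $\gamma$ is (locally) by the linearizing coordinates, the fiber stays a union of linear spaces all along $\gamma$, hence on a Zariski open dense subset of $\overline{\gamma} = X$. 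Being a finite union of linear subspaces of $N^*\mathcal F_x$ of dimension $q+1$ at a generic $x$, $Y$ is the graph of a rational multi-section of the Grassmann bundle $\mathrm{Gr}(q+1, N^*\mathcal F)$, i.e.\ $\mathcal F$ is tangent to a multi-distribution of codimension $q$; and since $Y \subsetneq \mathrm{ch}(\mathcal F)$ is a proper subvariety whose generic fiber is a proper linear subspace of the $(n-1)$-dimensional space $N^*\mathcal F_x$, we get $q+1 \le n-2$, i.e.\ $q \le \dim X - 2$.

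The main obstacle I expect is the propagation step: a priori the torus-invariance of the fiber only holds at points of $\gamma$ near $p$, and the eigenbasis of $A_i$ is defined only via the local linearizing chart, so one must argue carefully that the ``union of linear subspaces'' property is preserved under parallel transport of the dual Bott connection along all of $\gamma$ and then spreads to a dense open subset of $X$ by the Zariski density hypothesis. Concretely, one wants: (i) the flat sections of the dual connection restricted to $\gamma$ are, in the linearizing coordinates, the constant sections; (ii) $Y_{|\gamma}$, being $\mathcal F^{(1)}$-invariant and closed, is a union of such flat ``coordinate'' subbundles over the universal cover of $\gamma\setminus\{p\}$; (iii) the monodromy being in $T$ forces these to descend, and closedness in $\mathrm{ch}(\mathcal F)$ forces the generic fiber of $Y$ over $X$ to have the same linear structure. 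One subtlety to handle is that the generic fiber of $Y$ over $X$ and the fiber over a generic point of $\gamma$ agree only after shrinking to a common Zariski open set; Zariski density of $\gamma$ is exactly what guarantees such a common open set is nonempty, so the conclusion transfers to a generic point of $X$ as claimed.
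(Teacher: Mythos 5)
Your argument is essentially the paper's own proof: intersect $Y$ with the fiber of $\mathrm{ch}(\mathcal F)\to X$ over a point of the leaf through the Zariski-dense separatrix, note that this intersection is an algebraic set invariant under the Zariski closure of the dual linear holonomy group, which by non-resonance is a maximal torus, hence is a finite union of coordinate-linear subspaces, and then spread this Zariski-closed condition to generic points of $X$ using the density of the leaf. The only blemish is an off-by-one in your final count: the generic fiber of $Y$ is a union of linear subspaces of dimension $q=\dim Y-\dim X$ (not $q+1$), the associated multi-distribution has codimension $q$, and the bound $q\le \dim X-2$ follows directly from the fiber being a proper subvariety of the $(n-1)$-dimensional space $N^*\mathcal F_x$.
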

\begin{proof}
First consider a point $p_0 \in X$ in the Zariski dense separatrix through $p$, and let $L$ be the leaf of $\mathcal F$ through it.
The fiber $V$ of $E(N^*\mathcal F) \simeq ch(\mathcal F) \to X$ over
$p$ is a vector space of dimension $n-1$. The intersection of  $Y$ with $V$ is a subvariety of $V$ invariant by the subgroup  $G\subset GL(V)$
image of the representation
$\pi_1(L) \to GL(V)$ dual to the linear holonomy of $L$. Since $V\cap Y$ is algebraic, not only $G$ but also its Zariski closure leaves $V\cap Y$
invariant. By hypothesis, $\overline G \simeq (\mathbb C^*)^{n-1}$ is a maximal torus in $GL(V)$. Consequently $V\cap Y$ is a finite union of linear spaces
for an arbitrary $p \in L$. To be a finite union of linear subspaces is clearly a Zariski closed condition. Thus the same will hold
true for the fibers of $Y$ over points in the Zariski closure of $L$ which is, by assumption, equal to $X$.
\end{proof}

\section{From multi-distributions to invariant subvarieties}

We now proceed to establish the result which will replace Proposition \ref{P:aa}. We start with a simple lemma.

\begin{lemma}\label{L:normal}
Let $\omega \in \Omega^q = \Omega^q(\mathbb C^n) \otimes \mathbb C[[ x_1, \ldots, x_n]]$ be a
formal $q$-form. If $\omega$ is invariant by the natural $(\mathbb C^*)^n$-action on $\mathbb C^n$ then
\begin{equation}\label{E:normal}
\omega = f \cdot \left( \sum_{I\in \{1, \ldots, n\}^q} \lambda_I \frac{dx_{I}}{x_I} \right)
\end{equation}
where $f \in  \mathbb C[[ x_1, \ldots, x_n]]$,  $\lambda_I \in \mathbb C$ and
$
\frac{dx_I}{x_I} = \frac{dx_{i_1}}{x_{i_1}} \wedge \cdots \wedge \frac{dx_{i_q}}{x_{i_q}} \, .
$
\end{lemma}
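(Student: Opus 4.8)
The plan is to decompose $\omega$ into monomials and exploit the weight grading induced by the torus action. First I would write $\omega = \sum_{I} f_I(x)\, dx_I$ where $I$ ranges over increasing multi-indices of length $q$ (or, to match the statement, over all $I \in \{1,\dots,n\}^q$ with the antisymmetry conventions absorbed into the coefficients), and each $f_I = \sum_{\alpha} c_{I,\alpha} x^{\alpha}$ is a formal power series. The one-parameter subgroup $t\mapsto (t_1,\dots,t_n)$ acts on $dx_{i_1}\wedge\cdots\wedge dx_{i_q}$ by the character $t_{i_1}\cdots t_{i_q}$ and on the monomial $x^{\alpha}$ by $t^{\alpha}$. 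So the component of $\omega$ supported on $dx_I$ with coefficient monomial $x^{\alpha}$ is an eigenvector of weight $\alpha + e_{i_1} + \cdots + e_{i_q}$, where $e_j$ is the $j$-th standard basis vector. Since $(\mathbb C^*)^n$-invariance forces every weight space appearing in $\omega$ to have weight $0$, we must have $\alpha = -(e_{i_1}+\cdots+e_{i_q})$ for every monomial that actually occurs; but $\alpha$ has nonnegative entries, so this is impossible unless the indices $i_1,\dots,i_q$ are pairwise distinct, in which case $\alpha_{i_r} = 0$ for all $r$ and... wait — that still gives $\alpha = 0$ would be needed, not $\alpha = -\sum e_{i_r}$.

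The resolution is that $\omega$ being \emph{invariant} here should be read in the sense natural for the ambient problem, namely Lie-derivative invariance under the Euler-type vector fields $x_j\partial_{x_j}$, i.e.\ $\omega$ is a section over the torus orbit which is honestly invariant as a form on $(\mathbb C^*)^n$ but only \emph{formal} along the coordinate hyperplanes. Concretely, on $(\mathbb C^*)^n$ one trivializes $\Omega^q$ using the invariant frame $\{\tfrac{dx_{i_1}}{x_{i_1}}\wedge\cdots\wedge\tfrac{dx_{i_q}}{x_{i_q}}\}$, and an invariant $q$-form is exactly $\sum_I g_I(x)\,\tfrac{dx_I}{x_I}$ with each $g_I$ invariant, hence constant on $(\mathbb C^*)^n$; but $g_I$ constant combined with $\omega$ having formal-power-series (not Laurent) coefficients $f_I = g_I \cdot \prod_r x_{i_r}^{-1}$ forces all $g_I$ with repeated indices to vanish and, for the strictly-increasing $I$, forces $g_I/(\text{product of the missing variables' contributions})$ to be a single common power series $f$. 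So the concrete steps are: (1) pass to the torus $(\mathbb C^*)^n$ and write $\omega$ in the invariant logarithmic frame; (2) invoke that an invariant function on the torus is constant to get $\omega = \sum_I \lambda_I \tfrac{dx_I}{x_I}$ with $\lambda_I \in \mathbb C$ on $(\mathbb C^*)^n$; (3) re-expand in the $dx_I$ frame and impose that the coefficients lie in $\mathbb C[[x_1,\dots,x_n]]$ rather than in the localization, which clears denominators uniformly and produces the common factor $f$.

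The main obstacle, and the step deserving genuine care, is step (3): keeping track of which $\lambda_I$ are allowed to be nonzero and how the denominators $x_{i_1}\cdots x_{i_q}$ interact when one clears them simultaneously across all multi-indices. One has to check that demanding simultaneous holomorphicity of every coefficient $f_I$ does not over-constrain the $\lambda_I$ beyond what formula (\ref{E:normal}) allows — i.e.\ that the single scalar-valued factor $f$ genuinely suffices and one does not need a separate $f_I$ per term. This is really a statement that the $\mathbb C$-span of $\{\tfrac{dx_I}{x_I}\}$ inside $\Omega^q \otimes \mathbb C[x_1^{\pm1},\dots,x_n^{\pm1}]$ meets $\Omega^q$ (the holomorphic forms) only in the rank-one $\mathbb C[[x]]$-submodule generated by any fixed invariant form with all indices distinct; I would verify this by a direct monomial comparison, using that $\tfrac{dx_I}{x_I}$ for increasing $I$ are $\mathbb C[x^{\pm1}]$-linearly independent, so an element $\sum_I \lambda_I \tfrac{dx_I}{x_I}$ is holomorphic iff, after multiplying by $x_1\cdots x_n$, it is divisible by $x_1\cdots x_n$, which pins down the shape of $f$. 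Everything else — the weight/eigenvector bookkeeping in step (1)–(2) — is routine linear algebra of the torus action.
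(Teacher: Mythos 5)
Your opening computation is correct, and it is exactly the reason your proposed ``resolution'' cannot work. Strict invariance under $(\mathbb C^*)^n$, Lie-derivative invariance under the Euler fields $x_j\partial_{x_j}$, and ``honest invariance on the torus in the logarithmic frame with constant $g_I$'' are all the same condition: every monomial of $\omega$ must have weight $0$, which is impossible for nonnegative exponents, so $\omega=0$. Your step (3) does not rescue this: once each $g_I$ is a \emph{constant}, the coefficient $f_I=g_I/x_I$ lies in $\mathbb C[[x_1,\ldots,x_n]]$ only if $g_I=0$; there are no denominators to ``clear uniformly'' and no mechanism by which a nonconstant factor $f$ could appear. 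The hypothesis actually intended --- and the one under which the lemma is invoked in Proposition~\ref{P:vero}, where the group $G$ is defined by $A^*\omega\wedge\omega=0$ --- is \emph{projective} invariance: $A^*\omega=u_A\cdot\omega$ for a formal function $u_A$ depending on $A$. The factor $f$ in \eqref{E:normal} is there precisely to absorb this slack, and your argument never introduces it; as written, it proves only that a strictly invariant formal form vanishes.

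With the projective hypothesis the argument must be organized differently. The diagonal $\mathbb C^*$ shows that $\omega$ equals a formal unit times its lowest-degree homogeneous part, so one may assume $\omega$ homogeneous. For the $k$-th coordinate subgroup, each monomial $x^Jdx_I$ is an eigenvector of weight $j_k+\epsilon_k$ (with $\epsilon_k=1$ iff $k\in I$); proportionality of $\varphi_t^*\omega$ and $\omega$ forces all monomials of $\omega$ to share a single weight $m_k$ in each variable, and if $m_k\ge 2$ then $x_k$ divides every coefficient, so after dividing out a monomial one may assume $m_k\in\{0,1\}$ for all $k$. A monomial with all weights in $\{0,1\}$ is exactly $x_S\cdot\frac{dx_I}{x_I}$ with $S=\{k:\,m_k=1\}\supset I$, which yields \eqref{E:normal} with $f$ the product of all the factors divided out along the way. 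If you insist on the torus-restriction picture, the correct statement is that the $g_I$ are a \emph{common} invariant-up-to-scale function times constants, $g_I=\lambda_I\,g$, and one must then prove that $g$ is a monomial times a formal power series; that extension step is the real content, is exactly what the monomial-by-monomial argument supplies, and is absent from your write-up.
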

\begin{proof}
Write $\omega = \sum_{i=i_0}^{\infty} \omega_i$,
where the coefficients of $\omega_i$ are polynomials of degree $i$ and $\omega_{i_0} \neq 0$.
If $\varphi_t(x) = t\cdot x$ then
\[
\frac{(\varphi_t)^* \omega}{t^{i_0 + q}} = \omega_{i_0} + \sum_{i=i_0 + 1}^{\infty} t^{ i - t_0 + q} \omega_i \, .
\]
Since for arbitrary $t$, $\varphi_t^* \omega$ must be a multiple of $\omega$ then after dividing by a suitable
formal function we can assume that $\omega$ is homogeneous.

Let $x^J dx_I$ be a monomial appearing in $\omega$. Suppose $x_1^{j_1}$ divides $x^J$ but $x_1^{j_1+1}$ does not. Consider the automorphism
$\varphi_t(x_1,x_2, \ldots, x_n) = (tx_1, x_2, \ldots, x_n)$. Then $\varphi_t^* (x^J dx_I ) = t^{j_1 + \epsilon} x^J dx_I$,
where $\epsilon=0$ if $dx_1$ does not appear in $dx_I$ and $\epsilon=1$ otherwise.  If $j_1 + \epsilon \ge 2$ then $x_1$ divides all the other
monomials appearing in $\omega$. Does after division we can assume $j_1 + \epsilon =1$  and the same will hold true for any other
monomial appearing in $\omega$. Repeating the argument for the other coordinate functions makes clear the assertion of the lemma.
\end{proof}

\begin{prop}\label{P:vero}
Let $\xi$ be a germ of holomorphic vector field on  $(\mathbb C^n,0)$ with a non-resonant singularity at the origin. Suppose
$\xi$ is an infinitesimal automorphism of  a distribution  $\mathcal D$ of codimension $q \le n-2$. Then  $\mathcal D$ is integrable
and the singular set of $\mathcal D$ has positive dimension.
\end{prop}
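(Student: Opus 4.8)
The plan is to exploit the non-resonant singularity in the same spirit as in the proof that non-resonant singularities produce invariant curves: linearize $\xi$ and use the torus action thus obtained to put $\mathcal D$ in a normal form. Since $\xi$ has a non-resonant singularity, the $\mathbb Z$-independence of its eigenvalues places it in the Poincar\'e domain, so by Poincar\'e's linearization theorem there is a (formal, hence by non-resonance convergent) coordinate system in which $\xi = \sum \lambda_i x_i \partial_{x_i}$. The flow of $\xi$ then generates a one-parameter subgroup of the diagonal torus $(\mathbb C^*)^n$ whose Zariski closure is the whole torus, by the rank-$n$ hypothesis on the eigenvalues. Now $\mathcal D$ being preserved by $\xi$ means it is preserved by the closure of the flow, hence by the full $(\mathbb C^*)^n$-action.

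Next I would pick a local $q$-form $\omega$ defining $\mathcal D$ (a decomposable $q$-form whose kernel is the distribution), and average or directly invoke torus-invariance: $\varphi_t^*\omega$ is a nonvanishing multiple of $\omega$ for each $t$ in the torus. Applying Lemma~\ref{L:normal}, we get
\[
\omega = f \cdot \left( \sum_{|I|=q} \lambda_I \frac{dx_I}{x_I} \right),
\]
and after dividing by $f$ and clearing denominators we may assume $\omega$ has constant coefficients after multiplying through by $x_1\cdots x_n$ — more precisely, $\omega$ is, up to a unit, a logarithmic $q$-form with constant residues $\lambda_I$ along the coordinate hyperplanes. Such a form is manifestly closed, $d\omega = 0$, and closedness of a decomposable $q$-form is exactly the Frobenius integrability condition (for $q\ge 1$ one checks $d\omega = \eta\wedge\omega$ trivially holds with $\eta = 0$; decomposability is preserved since the torus acts linearly on the cotangent directions and hence permutes/rescales the factors of the original decomposable form). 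Hence $\mathcal D$ is integrable.

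For the statement about the singular set, I would argue that the integrating foliation is defined by the constant-coefficient logarithmic form $\sum \lambda_I \, dx_I/x_I$ times $x_1 \cdots x_n$; its singular locus is the vanishing of the coefficients of this polynomial $q$-form, which is a union of coordinate subspaces obtained by intersecting the hyperplanes $\{x_i = 0\}$ appearing in the various residues. Since $q \le n-2$, a dimension count shows this locus cannot be just the origin: each monomial coefficient $\lambda_I \prod_{j\notin I} x_j$ vanishes on a codimension-$(n-q)\le n$ subspace, but more carefully the common zero locus of all of them contains, for any two indices $i,i'$ not simultaneously forced to be $1$, a positive-dimensional coordinate subspace — concretely the singular set contains the locus where all but at most $q$ coordinates vanish, which has dimension $q \ge 1$ (using $q \le n-2$ to guarantee $q \ge 1$ is not quite enough, so one uses instead that codimension $q \le n-2$ forces at least two ``free'' directions interacting in the residue form). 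I expect the \textbf{main obstacle} to be precisely this last bookkeeping: verifying that the codimension hypothesis $q \le n-2$ genuinely forces the singular set of the normal-form distribution to have dimension $\ge 1$, and handling the degenerate cases where many residues $\lambda_I$ vanish (so $\mathcal D$ degenerates to a distribution adapted to fewer coordinate hyperplanes). One must also be slightly careful that the normal form from Lemma~\ref{L:normal} applies to a \emph{decomposable} $q$-form and that decomposability survives, but this follows because the torus acts by a diagonal linear representation on $1$-forms and hence takes decomposable forms to decomposable forms, so the averaged/limit form is still decomposable.
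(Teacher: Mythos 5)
Your overall strategy coincides with the paper's: formally linearize $\xi$, observe that the Zariski closure of the one-parameter group generated by the flow is the full torus $(\mathbb C^*)^n$ because the eigenvalues generate a rank-$n$ $\mathbb Z$-module, apply Lemma~\ref{L:normal} to put the defining $q$-form in logarithmic normal form, and read off integrability and the singular locus. However, there are two genuine problems. First, your claim that non-resonance ``places $\xi$ in the Poincar\'e domain, so \ldots\ convergent'' is false: the Poincar\'e domain is the condition that $0$ is not in the convex hull of $\lambda_1,\ldots,\lambda_n$, which has nothing to do with $\mathbb Z$-independence of the eigenvalues; non-resonance gives only a \emph{formal} linearization (convergence would require Poincar\'e or a Diophantine/Brjuno condition). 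The paper sidesteps this by invoking the criterion of de Medeiros, $(i_v\omega)\wedge\omega=(i_v\omega)\wedge d\omega=0$ for all $v\in\bigwedge^{q-1}\mathbb C^n$, which shows that integrability is a formal condition and may therefore be checked in a formal coordinate system; the same is true of the dimension of the singular scheme. Your proof as written relies on a convergent change of coordinates that you are not entitled to. Relatedly, you do not need to worry about ``decomposability surviving'': the algebraic identity $(i_v\omega)\wedge\omega=0$ is carried along automatically, and integrability follows simply because $\omega=f\eta$ with $d\eta=0$, so $d\omega=\frac{df}{f}\wedge\omega$ and $(i_v\omega)\wedge d\omega=0$ (note $d\omega\neq 0$ in general, contrary to your assertion).

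Second, you explicitly leave open the step that the normal form forces $\dim\mathrm{sing}(\mathcal D)\ge 1$, and this is not mere bookkeeping; it is where the hypothesis $q\le n-2$ enters. The paper's argument is: the sum $\sum\lambda_I\,dx_I/x_I$ must have at least two nonzero terms (a single term would make $\mathcal D$ a smooth foliation compatible with $\xi$, which is impossible), hence the set $\overline I=\bigcup_{\lambda_I\neq 0}I$ has cardinality $k>q$. Each coordinate hyperplane $\{x_i=0\}$ with $i\in\overline I$ is $\mathcal D$-invariant, so the intersection of any $q+1$ of them is an invariant subvariety of codimension $q+1$, strictly larger than the codimension of $\mathcal D$; such a subvariety must lie in $\mathrm{sing}(\mathcal D)$, and it has dimension $n-q-1\ge 1$ precisely because $q\le n-2$. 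Without this (or an equivalent) argument your proof of the second assertion of the proposition is incomplete.
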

\begin{proof}
Let  $\omega$ be a germ of holomorphic $q$-form, $q=n-p$, defining $\mathcal D$, that is $\mathcal D = \{ v \in T (\mathbb C^n,0) \, | \, \omega(v) =0 \} $.
For further use let us recall that a $q$-form  $\omega$ defines a codimension $q$ distribution if and only if
\[
(i_v \omega ) \wedge  \omega = 0 \quad \text{ for every } \quad v \in \bigwedge^{q-1} \mathbb C^{n} \, ,
\]
and this distribution is integrable if and only if
\[
(i_v \omega ) \wedge  \omega = (i_v \omega ) \wedge  d\omega =0 \quad \text{ for every } \quad v \in \bigwedge^{q-1} \mathbb C^{n} \, ,
\]
see \cite{Medeiros}. It follows that  integrability is a formal condition, and as such can be verified in an arbitrary formal coordinate system.

Since the origin is a non-resonant singularity for $\xi$, we can choose formal coordinates such that
\[
\xi = \sum_{i=1}^n \lambda_i x_i \partial_{x_i} \,
\]
where $\lambda_i \in \mathbb C$ are complex numbers.
In exchange we can no longer  assume that
$\omega$ is a holomorphic $q$-form, but it is certainly a formal $q$-form.

Since $\xi$ is an infinitesimal automorphism of $\mathcal D$, its flow  $\varphi_t : (\mathbb C^n,0) \to (\mathbb C^n,0)$
preserves $\omega$. More precisely,
\[
\varphi_t^* \omega = f(t,x) \omega
\]
for a suitable formal function $f \in \mathbb C[[ t, x_1, \ldots, x_n]]$.

Consider now the subgroup $G \subset (\mathbb C^*)^n \subset GL(\mathbb C^n)$ defined as
\[
G = \{ A \in (\mathbb C^*)^n  \, | \, A^* \omega \wedge \omega = 0 \text{ in } \bigwedge^2 \Omega^q \otimes \mathbb C [[ x_1, \ldots, x_n ]] \} \, ,
\]
where $(\mathbb C^* )^n$ acts on $(\mathbb C^n,0)$ through a diagonal linear map. The flow of $\xi$ determines a non-closed one parameter
subgroup of $H \subset G$. Since  $G$ is clearly an algebraic subgroup, it follows that the Zariski closure of $H$
 is also contained in $G$. But the dimension of the Zariski closure of $H$ is nothing more than the rank of the $\mathbb Z$-module
 generate by $\lambda_1, \ldots, \lambda_n$. It follows that $\overline H = G = (\mathbb C^* )^n$.

On the one hand, since $\omega$ induces a distribution $\i_v \omega \wedge \omega =0$. On the other hand, Lemma \ref{L:normal} implies
that $\omega$ is a multiple of a closed $q$-form, and consequently $\i_v \omega \wedge d\omega =0$. This shows that $\mathcal D$ is integrable.

\smallskip

It remains to prove that the singular set of $\mathcal D$ has positive dimension. Looking at the expression (\ref{E:normal}) we
realize that it must have at least two non-trivial summands. Indeed, if not, $\mathcal D$ would be a smooth foliation tangent
to $\xi$, what is clearly impossible. Therefore,  if  $k$ is the cardinality of the set
$\overline I = \cup_{\lambda_I \neq 0} I $, where the complex numbers  $\lambda_I$ are defined by (\ref{E:normal}),  then $k>q$. Clearly,
the coordinate hyperplanes with index in $\overline I$ is invariant by $\mathcal D$. Consequently the intersection of any $q+1$ of these coordinates
hyperplanes is also invariant
by $\mathcal D$. Since $\mathcal D$ has codimension $q$, this intersection must be contained in the singular locus of $\mathcal D$.
\end{proof}

\begin{remark}
Proposition \ref{P:vero} will be in the proof of the general case of Theorem \ref{T:1} what Proposition \ref{P:web} is in the proof of
the three-dimensional case. The analogy is not perfect as we do not proved here the integrability of multi-distributions as we did there.
Anyway, with some extra effort one can also prove the integrability of the multi-distribution. We will not pursue this here as the result above
is sufficient for our purposes.
\end{remark}

\section{Proof of Theorem \ref{T:1}}

Let $\mathcal F \in \mathbb P H^0(X,TX \otimes \mathcal L^{\otimes k})$ be a very generic foliation. We can assume, thanks
to Theorem \ref{T:CPbis},   that $\mathcal F$
has isolated singularities, at least one non-resonant singularity, and every leaf of $\mathcal F$ is Zariski dense.

Proposition \ref{P:multd} implies that $\mathcal F$ is tangent to a multi-distribution $\mathcal D$.
We can assume $\mathcal D$ is irreducible without loss of generality.

If $\mathcal D$ is locally decomposable  around $p$ then  Proposition \ref{P:vero} implies
the existence of a positive dimensional irreducible component $Z$ of the singular set of $\mathcal D$ through $p$. This set
is clearly algebraic and  invariant by $\mathcal F$ since sections of $T\mathcal F$ are infinitesimal automorphisms of $\mathcal D$.
If $\mathcal D$ is not locally decomposable at $p$  then there exists
a subvariety $Z \subsetneq X$ where $\mathcal D$ is not locally decomposable. As above, we conclude that  $Z$ is  invariant by $\mathcal F$.

In both cases, we arrive at a contradiction with Theorem \ref{T:CPbis}.  \qed

\section{Bernstein-Lunts Conjecture}\label{S:affine}

Theorem \ref{T:1} implies  the existence of foliations, on arbitrary projective varieties, with quasi-minimal characteristic
variety. Moreover, as the conclusion of Theorem \ref{T:CPbis} holds true for any foliation with ample cotangent bundle on $\mathbb P^n$,
the existential part of Bernstein-Lunts Conjecture is settled. Nevertheless, there is still a detail to be dealt with in order to prove that a
\emph{very generic} polynomial vector field  of degree $d\ge 2$ has quasi-minimal characteristic variety.

\subsection{Projective versus affine degree}

The (projective) degree of a holomorphic foliation $\mathcal F$ on $\mathbb P^n$  is defined as the
degree of the tangency divisor of $\mathcal F$ with a generic hyperplane $H$. If $\mathcal F \in \mathbb P H^0(\mathbb P^n, \mathcal O_{\mathbb P^n}(k))$
then the degree of $\mathcal F$ is equal to $k+1$.

If one starts with a polynomial vector field $\xi$  of degree $d$ on $\mathbb C^n$ then it is natural to extend it to
holomorphic foliation $\mathcal F_{\xi}$  on $\mathbb P^n$ such that $H$ is not contained in the singular set of $\mathcal F_{\xi}$.
We set the degree of $\xi = \sum a_i \partial_i$ as the maximal degree of its
coefficients $a_i$. In general the (projective) degree of $\mathcal F_{\xi}$ is at most the (affine) degree of $\xi$. Moreover precisely,
\[
\deg (\mathcal F_{\xi} ) = \left\{ \begin{array}{ll}
                                     \deg(\xi) & \text{ if } H \text{ is invariant by }  \mathcal F_{\xi}, \\
                                     \deg(\xi)-1  &\text{ if } H \text{ is not invariant by  }  \mathcal F_{\xi}.
                                   \end{array} \right.
\]
If $\mathcal D(n,d)$ is the set of polynomial vector fields of degree at most $d$ then the generic element in it
extends to a foliation of $\mathbb P^n$ with singularities of codimension at least two which leaves the hyperplane
at infinity invariant, see \cite{Z} for a through discussion. In more intrinsic terms, if $T \mathbb P^n ( - \log H)$ denotes the subsheaf of $T\mathbb P^n$ constituted by
germs of vector fields tangent to $H$ then $\mathcal D(n,d)$ can be identified with $H^0(\mathbb P^n, T \mathbb P^n (- \log H) \otimes \mathcal O_{\mathbb P^n}(d-1))$.
Under this identification the extension which do not leave the hyperplane at infinity invariant will appear with a divisorial component in their singular set supported
there.

\subsection{Relative version of Theorem \ref{T:CPbis}}
The proof of Theorem \ref{T:CPbis}  can be adapted to prove the following

\begin{thm}\label{T:CPbisrel}
Let $ X$ be a smooth projective variety and  $H \subset X$ a smooth hypersurface. Let also  $\mathcal L$ be an ample line bundle over $X$, and $k \gg 0$ a sufficiently large integer. If $ \mathcal F \in \mathbb P H^0(X, TX (- \log H) \otimes \mathcal L^{\otimes k})$ is a very generic foliation then
every leaf of $\mathcal F$ not contained in $H$ is Zariski dense.
\end{thm}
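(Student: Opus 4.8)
The plan is to imitate, step by step, the proof of Theorem~\ref{T:CPbis}, replacing the tangent bundle $TX$ by the logarithmic tangent bundle $TX(-\log H)$ and keeping careful track of the fact that $H$ is automatically invariant by any $\mathcal{F}$ coming from a section of $TX(-\log H)\otimes\mathcal{L}^{\otimes k}$. Just as in the non‑logarithmic case, the statement about leaves will be deduced from a Jouanolou‑type input, here the following logarithmic analogue of Theorem~\ref{T:CP}: \emph{for $X$, $H$, $\mathcal{L}$ as above and $k\gg0$, a very generic $\mathcal{F}\in\mathbb{P}H^0(X,TX(-\log H)\otimes\mathcal{L}^{\otimes k})$ has no irreducible $\mathcal{F}$-invariant subvariety $Z$ with $Z\not\subset H$ and $1\le\dim Z\le\dim X-1$.} Granting this, Theorem~\ref{T:CPbisrel} is immediate: if $L$ is a leaf not contained in $H$, its Zariski closure $\overline L$ is irreducible, $\mathcal{F}$-invariant, of dimension at least one, and not contained in $H$, hence equals $X$. (As in \cite{CP}, for $k\gg0$ the generic such $\mathcal{F}$ has isolated singularities and no divisorial component in its singular set, so that it really is a foliation with cotangent bundle $\mathcal{L}^{\otimes k}$ leaving $H$ invariant; I will take this for granted.)

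To prove the logarithmic analogue I would run the incidence‑variety argument of \cite{CP} inside the linear subsystem $\mathbb{P}H^0(X,TX(-\log H)\otimes\mathcal{L}^{\otimes k})\subset\mathbb{P}H^0(X,TX\otimes\mathcal{L}^{\otimes k})$. For each Hilbert polynomial $P$ whose associated dimension lies between $1$ and $\dim X-1$, set
\[
\mathcal{I}_P=\bigl\{(\mathcal{F},Z)\ :\ Z\in\mathrm{Hilb}_P(X),\ Z\not\subset H,\ Z\ \text{is}\ \mathcal{F}\text{-invariant}\bigr\},
\]
sitting inside the product of the parameter space with $\mathrm{Hilb}_P(X)$. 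Since $\mathrm{Hilb}_P(X)$ is proper and invariance of $Z$ is a Zariski‑closed condition on $\mathcal{F}$, the image $V_P$ of $\mathcal{I}_P$ under the first projection is closed, and the locus of foliations violating the claim is contained in the countable union $\bigcup_P V_P$. Hence it suffices to show that each $V_P$ is a \emph{proper} subvariety of the parameter space.

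The properness of $V_P$ is the only place where \cite{CP} must be revisited. Two observations do the job. First, for $k\gg0$ --- with a threshold depending only on $X$, $H$, $\mathcal{L}$ --- the bundle $TX(-\log H)\otimes\mathcal{L}^{\otimes k}$ is globally generated, by Serre vanishing; since it coincides with $TX\otimes\mathcal{L}^{\otimes k}$ over $X\setminus H$, for any fixed positive‑dimensional $Z\not\subset H$ and any smooth point $z\in Z\setminus H$ the evaluation map $H^0(X,TX(-\log H)\otimes\mathcal{L}^{\otimes k})\to(TX\otimes\mathcal{L}^{\otimes k})_z$ is onto, so there is a section $\xi$ with $\xi(z)\notin T_zZ$; thus $\{\mathcal{F}:Z\ \mathcal{F}\text{-invariant}\}$ is a proper linear subspace of the parameter space. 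Second, because $Z\not\subset H$ the open set $Z\setminus H$ is Zariski dense in $Z$, so the tangency constraints cutting out $\mathcal{I}_P$, and the comparison between the codimension of $\{\mathcal{F}:Z\ \text{invariant}\}$ and the dimension of the relevant stratum of $\mathrm{Hilb}_P(X)$ that makes the dimension count of \cite{CP} work, can all be carried out on the quasi‑projective variety $X\setminus H$, where the logarithmic and ordinary tangent bundles agree. Those estimates are then literally the ones of \cite{CP}, and they yield $\dim\mathcal{I}_P<\dim\mathbb{P}H^0(X,TX(-\log H)\otimes\mathcal{L}^{\otimes k})$, hence $V_P\subsetneq\mathbb{P}H^0(X,TX(-\log H)\otimes\mathcal{L}^{\otimes k})$.

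The main obstacle is precisely this last bookkeeping: one must make sure that turning $H$ into an invariant hypersurface does not manufacture invariant subvarieties \emph{off} $H$ that escape the count --- without the restriction $Z\not\subset H$ the claim would be false. This is exactly what forces the systematic passage to $X\setminus H$, where $H$ is invisible and the estimates of \cite{CP}, which are local in nature away from the loci being examined, transfer unchanged. A secondary, harmless point --- handled as in the non‑logarithmic case --- is that the global‑generation threshold $k\gg0$ is chosen before $P$; properness of each $V_P$ for that fixed $k$ uses only the general‑position statement above, and the residual dependence on $P$ is absorbed by the countable union.
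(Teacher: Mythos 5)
Your outline matches what the paper actually does: the paper gives no proof of Theorem~\ref{T:CPbisrel}, stating only that the argument for Theorem~\ref{T:CPbis} ``can be adapted'' and citing Coutinho's Theorem~4.2 for the projective-space case that is actually used, and your adaptation --- run the incidence-variety/dimension count of \cite{CP} restricted to subvarieties $Z\not\subset H$, working over $X\setminus H$ where $TX(-\log H)$ and $TX$ coincide --- is precisely that adaptation. One small correction: since $Z\not\subset H$ is an open condition, $\mathcal{I}_P$ is only locally closed and $V_P$ only constructible rather than closed, but this is harmless because the bound $\dim\mathcal{I}_P<\dim\mathbb{P}H^0(X,TX(-\log H)\otimes\mathcal{L}^{\otimes k})$ still forces the closure of $V_P$ to be proper.
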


We will not detail its proof as the case of  projective spaces (the one used in the proof of Theorem \ref{T:2} below)  is
Theorem 4.2 of \cite{Coutinho}. Moreover, there it is proved that it suffices to take $k\ge 1$ when $X = \mathbb P^n$ and $\mathcal L= \mathcal O_{\mathbb P^n}(1)$.

\subsection{Proof of Theorem \ref{T:2}}

According to Theorem \ref{T:CPbisrel} the leaves of a very generic vector field of degree $d \ge 2$ are Zariski dense. Also a
very generic vector field  has  at least one non-resonant singularity. Thus we can apply Propositions \ref{P:multd} and \ref{P:vero}
to conclude that the characteristic variety of $\mathcal F_{\xi}$ is quasi-minimal. \qed

\end{document}